\documentclass[a4paper,11pt,leqno]{article}%

\usepackage{amsmath}%
\usepackage{amsfonts}%
\usepackage{amssymb}%
\usepackage{graphicx}
\usepackage{dsfont}
\usepackage{amsthm}
\usepackage{mathrsfs}
\usepackage{ bbm }
\usepackage{lmodern}
\usepackage[T1]{fontenc}
\usepackage[utf8]{inputenc}
\usepackage{ braket }
\usepackage{hyperref}
\usepackage{xcolor}
\usepackage[all]{xypic}
\usepackage{geometry}
\usepackage{tikz} 
\usepackage{tikz-cd}
\usepackage{tensor} 
\usetikzlibrary{snakes}
\usepackage{bbding} 
\usepackage{titlesec}
\titlelabel{\thetitle.\enspace} 
\usepackage{blindtext, titlefoot} 
\usepackage[symbol]{footmisc}

\usepackage{tocloft}
\usepackage[numbers]{natbib} 

\theoremstyle{plain}
\newtheorem{theorem}{Theorem}[section]
\newtheorem{proposition}[theorem]{Proposition}
\newtheorem{lemma}[theorem]{Lemma}
\newtheorem{conjecture}[theorem]{Conjecture}
\newtheorem{corollary}[theorem]{Corollary}

\newtheorem{setup}[theorem]{Set-up}

\theoremstyle{definition}
\newtheorem{definition}[theorem]{Definition}
\newtheorem{example}[theorem]{Example}

\theoremstyle{remark}
\newtheorem{remark}[theorem]{Remark}

\newcommand{\numberset}{\mathbb}
\newcommand{\N}{\numberset{N}}
\newcommand{\Z}{\numberset{Z}}
\newcommand{\Q}{\numberset{Q}}

\newcommand{\C}{\numberset{C}}

\newcommand{\Pn}[2]{\mathbb{P}^{#1}_{#2}}
\newcommand{\An}[2]{\mathbb{A}^{#1}_{#2}}
\newcommand{\Ox}[1]{\mathcal{O}_{#1}} 
\newcommand{\cG}{\mathcal{G}}

\usepackage{amsmath,calligra,mathrsfs}
\DeclareMathOperator{\Hom}{\mathscr{H}\textit{\kern -3pt {om}}} 

\DeclareMathOperator{\supp}{Supp}
\DeclareMathOperator{\codim}{codim} 
\DeclareMathOperator{\Exc}{Exc} 
\DeclareMathOperator{\Bs}{Bs}

\makeatletter
\newcommand{\xleftrightarrow}[2][]{\ext@arrow 3359\leftrightarrowfill@{#1}{#2}}
\newcommand{\xdashrightarrow}[2][]{\ext@arrow 0359\rightarrowfill@@{#1}{#2}}
\newcommand{\xdashleftarrow}[2][]{\ext@arrow 3095\leftarrowfill@@{#1}{#2}}
\newcommand{\xdashleftrightarrow}[2][]{\ext@arrow 3359\leftrightarrowfill@@{#1}{#2}}
\def\rightarrowfill@@{\arrowfill@@\relax\relbar\rightarrow}
\def\leftarrowfill@@{\arrowfill@@\leftarrow\relbar\relax}
\def\leftrightarrowfill@@{\arrowfill@@\leftarrow\relbar\rightarrow}
\def\arrowfill@@#1#2#3#4{%
  $\m@th\thickmuskip0mu\medmuskip\thickmuskip\thinmuskip\thickmuskip
   \relax#4#1
   \xleaders\hbox{$#4#2$}\hfill
   #3$%
}
\makeatother

\makeatletter
\newcommand{\addresses}[2]{\gdef\@addressA{#1} \gdef\@addressB{#2}}
\newcommand{\email}[1]{\gdef\@email{\url{#1}}}
\newcommand{\@endstuff}{\par\vspace{\baselineskip}\noindent\small
\begin{tabular}{@{}l}\scshape\@addressA\\
\scshape\@addressB\\
\textit{E-mail address:} \@email\end{tabular}}
\AtEndDocument{\@endstuff}
\makeatother

\makeatletter
\let\save@mathaccent\mathaccent
\newcommand*\if@single[3]{%
  \setbox0\hbox{${\mathaccent"0362{#1}}^H$}%
  \setbox2\hbox{${\mathaccent"0362{\kern0pt#1}}^H$}%
  \ifdim\ht0=\ht2 #3\else #2\fi
  }
\newcommand*\rel@kern[1]{\kern#1\dimexpr\macc@kerna}
\newcommand*\widebar[1]{\@ifnextchar^{{\wide@bar{#1}{0}}}{\wide@bar{#1}{1}}}
\newcommand*\wide@bar[2]{\if@single{#1}{\wide@bar@{#1}{#2}{1}}{\wide@bar@{#1}{#2}{2}}}
\newcommand*\wide@bar@[3]{%
  \begingroup
  \def\mathaccent##1##2{%
    \let\mathaccent\save@mathaccent
    \if#32 \let\macc@nucleus\first@char \fi
    \setbox\z@\hbox{$\macc@style{\macc@nucleus}_{}$}%
    \setbox\tw@\hbox{$\macc@style{\macc@nucleus}{}_{}$}%
    \dimen@\wd\tw@
    \advance\dimen@-\wd\z@
    \divide\dimen@ 3
    \@tempdima\wd\tw@
    \advance\@tempdima-\scriptspace
    \divide\@tempdima 10
    \advance\dimen@-\@tempdima
    \ifdim\dimen@>\z@ \dimen@0pt\fi
    \rel@kern{0.6}\kern-\dimen@
    \if#31
      \overline{\rel@kern{-0.6}\kern\dimen@\macc@nucleus\rel@kern{0.4}\kern\dimen@}%
      \advance\dimen@0.4\dimexpr\macc@kerna
      \let\final@kern#2%
      \ifdim\dimen@<\z@ \let\final@kern1\fi
      \if\final@kern1 \kern-\dimen@\fi
    \else
      \overline{\rel@kern{-0.6}\kern\dimen@#1}%
    \fi
  }%
  \macc@depth\@ne
  \let\math@bgroup\@empty \let\math@egroup\macc@set@skewchar
  \mathsurround\z@ \frozen@everymath{\mathgroup\macc@group\relax}%
  \macc@set@skewchar\relax
  \let\mathaccentV\macc@nested@a
  \if#31
    \macc@nested@a\relax111{#1}%
  \else
    \def\gobble@till@marker##1\endmarker{}%
    \futurelet\first@char\gobble@till@marker#1\endmarker
    \ifcat\noexpand\first@char A\else
      \def\first@char{}%
    \fi
    \macc@nested@a\relax111{\first@char}%
  \fi
  \endgroup
}
\makeatother

\graphicspath{{figuri/}}

\definecolor{blush}{rgb}{0.87, 0.36, 0.51}
\definecolor{jazzberryjam}{rgb}{0.65, 0.04, 0.37}
\definecolor{tiffanyblue}{rgb}{0.04, 0.73, 0.71}
\definecolor{darkcyan}{rgb}{0.0, 0.55, 0.55}

\hypersetup{
    colorlinks=true,
    linkcolor=jazzberryjam,
    citecolor=darkcyan,
    filecolor=blush,      
    urlcolor=tiffanyblue
    }

\title{\textbf{On the Iitaka conjecture for anticanonical divisors in positive characteristic}}
\date{}
\author{Marta Benozzo}
\addresses{Imperial College London, London, SW7 2AZ, UK}{University College London, London, WC1E 6BT, UK}
\email{marta.benozzo.20@ucl.ac.uk}

\begin{document}

\maketitle\unmarkedfntext{2020 Mathematics Subject Classification. 14D06, 14G17.}

\begin{abstract}

Given a fibration over a perfect field of positive characteristic, we study an Iitaka-type inequality for the anticanonical divisors. 
We conclude that it holds when the source of the fibration is a threefold or when the target is a curve, the general fibre is regular and the pair induced on it from the ambient space is strongly $F$-regular.
We then give counterexamples in characteristics $2$ and $3$ for fibrations with non-normal fibres, constructed from Tango--Raynaud surfaces.

\end{abstract}

\tableofcontents

\section*{Introduction}

The aim of the minimal model program is to classify varieties $X$ up to birational equivalence. One of the main invariants used in this classification is the Kodaira dimension $\kappa(X, K_X)$, which can be thought of as a higher dimensional analogue of the genus for curves.
The Iitaka conjecture addresses the problem of relating Kodaira dimensions in a fibration.

\begin{conjecture}[$C_{n,m}$, \cite{Iitaka}]
Let $f\colon X \rightarrow Z$ be a fibration between smooth projective varieties over $\C$ with $\dim(X)=n$ and $\dim(Z)=m$.
Let $\Phi$ be a general fibre of $f$. Then
\[
\kappa(X, K_X) \geq \kappa(\Phi, K_{\Phi}) + \kappa(Z, K_Z).
\]
\end{conjecture}

In characteristic $0$, this inequality has been proven in many important situations.
It is then natural to ask whether the same holds true over fields of positive characteristic.
In \cite[Theorem 1.2]{CZ} the conjecture is proven when the general fibre of $f$ is a curve and $f$ is separable. 
The papers \cite{3folds4, 3folds, 3folds2} show that the conjecture does hold when $\dim(X)=3$, the characteristic of the base field is $p>5$, and the geometric generic fibre is smooth.
Zhang \cite{3folds3} proves $C_{3,m}$ when the base of the fibration is of general type and the relative canonical divisor is relatively big, but they relax the assumptions on the smoothness of the generic fibre.
However, it is known that over fields of positive characteristic, the Iitaka conjecture does not hold in general. Cascini, Ejiri, Kollár, Zhang \cite{Paolo} give some counterexamples using the construction of Tango--Raynaud surfaces.

Recently, a similar statement for the anticanonical divisor has been proven in characteristic $0$ by Chang \cite{anti}. 
\begin{theorem}[{\cite[Theorem 1.1]{anti}}]
Let $f\colon X \rightarrow Z$ be a fibration between normal projective $\Q$-Gorenstein varieties over an algebraically closed field of characteristic $0$, and let $\Phi$ be a general fibre of $f$.
Suppose $X$ has at worst klt singularities and $-K_X$ is effective with stable base locus that does not dominate $Z$. Then
\[
\kappa(X, -K_X) \leq \kappa(\Phi, -K_\Phi) + \kappa(Z, -K_Z).
\] 
\end{theorem}

The result is generalised to pairs $(X, \Delta)$ with similar assumptions (see \cite[Theorem 4.1]{anti}). 
We call $C_{n,m}^-$ this version of the Iitaka conjecture.

We may wonder then if the same inequality holds over fields of positive characteristic.
Ejiri and Gongyo \cite{EG} studied positivity properties of the relative anticanonical divisor of a fibration $f\colon X \rightarrow Z$ over fields of any characteristic. 
In \cite{anti}, these results are combined with the canonical bundle formula theorem in \cite{Ambro} to transfer positivity properties from $-K_X$ to $-K_Z$.

If we work in low dimensions, the proofs go through with some modifications.
In particular, the results in \cite{EG} allow us to prove $C_{n,1}^-$.

\begin{theorem}[$C_{n,1}^-$] \label{Cn1-}
Let $f\colon  X \rightarrow Z$ be a fibration from a normal projective variety onto a smooth curve over a perfect field of characteristic $p>0$. 
Let $\Delta$ be an effective $\Z_{(p)}$-divisor on $X$ such that $K_X + \Delta$ is $\Q$-Cartier.
Suppose that the general fibre $\Phi$ is regular and the pair $(\Phi, \Delta_\Phi)$ is strongly $F$-regular, where $\Delta_{\Phi}$ is defined by $(K_X+\Delta)|_{\Phi}=K_{\Phi}+\Delta_{\Phi}$.
Assume, moreover, that there exists $\widebar{m}$ not divisible by $p$ such that $\Bs(-\widebar{m}(K_X+\Delta))$ does not dominate $Z$.
Then
\[
\kappa(X, -(K_X+\Delta)) \leq \kappa(\Phi, -(K_\Phi+\Delta_\Phi)) + \kappa(Z, -K_Z).
\]
\end{theorem}

The canonical bundle formula theorem is not known in full generality in positive characteristic. However, using the results in \cite{Jakub} instead of those in \cite{Ambro}, we prove $C_{n,n-1}^-$ when the fibres of $f$ are smooth curves and $\kappa(Z, -K_Z)=0$.

At this point, we need to restrict our study to threefolds.
When $X$ is a threefold, we have that either the base of the fibration or the general fibre is a curve, so that we can apply the previous results.
Moreover, when $\kappa(Z, -K_Z) \neq 0$, following the strategy in \cite{anti}, we exploit the Iitaka fibration induced by $-K_Z$ to reduce to the case where the base has trivial anticanonical Iitaka dimension.
However, over fields of positive characteristic, we do not have the same control over the singularities of the fibres of the Iitaka fibration.
If $Z$ is a surface, then the Iitaka fibration induced by the anticanonical divisor is at worst a quasi-elliptic fibration.
Therefore, if the characteristic of the base field is $p \geq 5$, then the general fibres are smooth elliptic curves.
Thus $C_{3,m}^-$ follows.

\begin{theorem}[$C_{3,m}^-$] \label{C3m-}
Let $f\colon X \rightarrow Z$ be a fibration from a normal projective threefold $X$ to a normal projective $\Q$-factorial variety $Z$ over a perfect field of characteristic $p \geq 5$.
Let $\Delta$ be an effective $\Z_{(p)}$-divisor on $X$ such that $K_X+\Delta$ is $\Q$-Cartier.
Suppose that the general fibre $\Phi$ is regular and the pair $(\Phi, \Delta_\Phi)$ is strongly $F$-regular, where $\Delta_{\Phi}$ is defined by $(K_X+\Delta)|_{\Phi}=K_{\Phi}+\Delta_{\Phi}$.
Assume, moreover, that there exists $\widebar{m}$ not divisible by $p$ such that $\Bs(-\widebar{m}(K_X+\Delta))$ does not dominate $Z$.
Then
\[
\kappa(X, -(K_X+ \Delta)) \leq \kappa(\Phi, -(K_\Phi+ \Delta_\Phi)) + \kappa(Z,-K_Z).
\]
\end{theorem}

In the last section, we study some counterexamples to $C_{n,m}^-$.
The assumptions on the base locus and on the singularities of the fibres cannot be removed.
In fact, it is easy to construct counterexamples to $C_{n,m}^-$ over fields of any characteristic already in dimension $2$ when the stable base locus of $-K_X$ does dominate $Z$: they are given by some ruled surfaces \cite[Example 1.7]{anti}.

Instead, fibrations similar to those used in \cite{Paolo} to prove that $C_{n,m}$ does not hold over fields of positive characteristic give counterexamples to $C_{n,m}^-$ in characteristics $2$ and $3$ if we remove the assumption on the singularities of the fibres.
This construction is based on the fact that Kodaira vanishing does not hold in positive characteristic.
In particular, there are curves $C$ on which there exists a non-zero effective divisor $D$ such that $H^1(C, \Ox{C}(-D)) \neq 0$.
A careful choice of the divisor $D$ and of a vector bundle of rank $2$ corresponding to a non-split extension in $H^1(C, \Ox{C}(-D))$ allows us to construct Tango--Raynaud surfaces.
Using them, we find counterexamples to $C_{7,6}^-$ in characteristics $2$ and $3$.

\subsection*{Acknowledgements}
I would like to thank my PhD advisor Paolo Cascini for suggesting the problem, for his guidance throughout, indispensable support and the useful feedback.
I would like to thank Jefferson Baudin, Fabio Bernasconi, Iacopo Brivio, Chi-Kang Chang, and Christopher D.\ Hacon for some helpful suggestions and comments.
I would like to thank my colleagues at University College London and Imperial College London for helpful discussions.
I would like to thank the anonymous referees for their detailed comments.
This project started during my first year at the LSGNT, to which I would like to express my gratitude for providing a great environment for discussions.
This work was supported by the Engineering and Physical Sciences Research Council [EP/S021590/1], the EPSRC Centre for Doctoral Training in Geometry and Number Theory (The London School of Geometry and Number Theory), University College London.

\section{Preliminaries}
\addtocontents{toc}{\protect\setcounter{tocdepth}{1}}

\subsubsection*{Notations}
\begin{itemize}
\item By variety we mean a Noetherian integral separated scheme of finite type over a field $k$.
\item Let $\mathcal{F}$ be a reflexive sheaf of rank $1$ on a normal variety $X$. By $\mathcal{F}^{[a]}$ we denote the $a^{\text{th}}$ reflexive power of $\mathcal{F}$, for $a \in \N$.
\item A fibration $f\colon  X \rightarrow Z$ is a projective morphism of normal varieties such that $f_*\Ox{X}= \Ox{Z}$.
\item Given a fibration $f \colon X \rightarrow Z$, by a general fibre of $f$ we mean any fibre over a closed $\widebar{k}$-point in an appropriate dense open subset of $Z$, where $\widebar{k}$ is the algebraic closure of $k$.
\item The canonical divisor of a variety $X$ is denoted by $K_X$.
\item To say that two divisors $D_1$ and $D_2$ are linearly equivalent, we write $D_1 \sim D_2$. For $\Q$-divisors $D_1$ and $D_2$, $D_1 \sim_{\Q} D_2$ means that there is a non-zero integer $m$ such that $mD_1 \sim mD_2$.
\item A $\Q$-divisor $D$ is said to be a $\Z_{(p)}$-(Weil) divisor if none of the denominators of its coefficients is divisible by $p$. It is called $\Z_{(p)}$-Cartier if there exists $m \in \Z_{>0}$ not divisible by $p$ such that $mD$ is Cartier. For $\Z_{(p)}$-divisors $D_1$ and $D_2$, $D_1 \sim_{\Z_{(p)}} D_2$ means that there is a non-zero integer $m$ not divisible by $p$ such that $mD_1 \sim mD_2$.
\item The linear system induced by a divisor $L$ is denoted by $|L|$.
\item A $\Q$-divisor $L$ is said to be $\Q$-effective (resp.\ $\Z_{(p)}$-effective) if there exists $m \in \Z_{>0}$ (resp.\ $m \in \Z_{>0}$ not divisible by $p$) such that $|mL| \neq \emptyset$.
\item If $Y \subseteq X$ is a subvariety of $X$ and $V$ is a linear system on $X$, $V_Y$ denotes its restriction to $Y$.
\item By $\left\lceil D \right\rceil$ (resp.\ $\left\lfloor D \right\rfloor$) we denote the divisor obtained by taking the round-up (resp.\ round-down) of every coefficient of the components of $D$.
\item Given a $\Q$-Cartier divisor $D$ on a normal variety $X$, we denote by $\kappa(X, D)$ the Iitaka dimension defined by $D$.
\item The base locus of a divisor $D$ is denoted by $\Bs(D)$.
\item Let $f\colon X \rightarrow Z$ be a fibration with normal geometric generic fibre $X_{\widebar{\eta}}$ and let $D$ be a $\Q$-divisor on $X$. Then we denote by $D_{\widebar{\eta}}$ its base change to $X_{\widebar{\eta}}$.
\end{itemize}

\subsection{Singularities}

First, we recall the definitions of the main classes of singularities considered in birational geometry.

\begin{definition}
Let $(X, \Delta)$ be a pair such that $X$ is a normal variety and $K_X + \Delta$ is $\Q$-Cartier. Then, given a birational morphism from a normal variety $\sigma \colon  Y \rightarrow X$, we write
\[
K_Y + (\sigma^{-1})_*\Delta \sim_{\Q} \sigma^*(K_X+\Delta) + \sum_{i \in I} a(E_i, X, \Delta) E_i,
\]
where the $E_i$s are all the prime exceptional divisors of $\sigma$. 
The quantity  $a(E, X, \Delta)$ is called the \textbf{discrepancy} of $E$ with respect to $(X, \Delta)$.
We say $(X, \Delta)$ is
\begin{itemize}
\item[•] \textbf{terminal} if $a(E, X, \Delta)>0$ for all possible exceptional divisors $E$ over $X$;
\item[•] \textbf{canonical} if $a(E, X, \Delta)\geq 0$ for all possible exceptional divisors $E$ over $X$;
\item[•] \textbf{Kawamata log terminal} or \textbf{klt} if $a(E, X, \Delta)>-1$ for all possible exceptional divisors $E$ over $X$ and $\lfloor \Delta \rfloor \leq 0$;
\item[•] \textbf{log canonical} or \textbf{lc} if $a(E, X, \Delta)\geq -1$ for all possible exceptional divisors $E$ over $X$.
\end{itemize}
\end{definition}

One of the peculiarities of fields of positive characteristic is the presence of the Frobenius morphism, which gives rise to inseparable morphisms.
When working over these fields, it is natural to define other types of singularities according to the behaviour of the Frobenius morphism around them. 
They turn out to be strictly related to the above ones.

\begin{definition}
Let $X$ be a scheme of finite type over a perfect field of characteristic $p>0$.
Define the \textbf{absolute Frobenius morphism} $F\colon X \rightarrow X$ as the identity on the topological space and the $p^{\text{th}}$-power on the structure sheaf. We will consider also its $e^{\text{th}}$-powers $F^e$, defined likewise by taking the identity on the topological space and the $(p^{e})^{\text{th}}$-power on the structure sheaf.
\end{definition}

\begin{definition}
Let $X$ be a normal variety over a perfect field of characteristic $p>0$ and let $\Delta$ be an effective $\Q$-divisor on it.
The log pair $(X, \Delta)$ is called \textbf{globally $F$-split} if there is an integer $e>0$ such that the natural map induced by the $e$\textsuperscript{th} power of the Frobenius morphism
\[
\Ox{X} \rightarrow F^{e}_*\Ox{X} \rightarrow F^{e}_*\Ox{X}(\left\lceil (p^e-1)\Delta \right\rceil)
\]
splits.\\
We say that it is \textbf{globally $F$-regular} if for every effective Weil divisor $D$, there is an integer $e>0$ such that the map
\[
\Ox{X} \rightarrow F^{e}_*\Ox{X} \rightarrow F^{e}_*\Ox{X}(\left\lceil (p^e-1)\Delta \right\rceil + D)
\]
splits.\\
We call $(X, \Delta)$ \textbf{sharply F-pure} (resp.\ \textbf{strongly $F$-regular} or \textbf{SFR} for short) if $X$ is covered by a finite number of open subsets $U$ such that the pairs $(U, \Delta|_{U})$ are globally $F$-split (resp.\ globally $F$-regular).
\end{definition}

\begin{remark} \label{klt and SFR}
If $(X, \Delta)$ is sharply F-pure (resp.\ SFR), then it is log canonical (resp.\ klt) (\cite{SFRklt}).
\end{remark}

\begin{remark} \label{r-openness of SFR}
If $(X, \Delta)$ is SFR and $E$ is an effective divisor, then for any sufficiently small $\varepsilon >0$, $(X, \Delta + \varepsilon E)$ is SFR as well (\cite[Remark 2.8]{CTX}).
\end{remark}

One of the main ingredients we will need is the fact that we can perturb a strongly $F$-regular pair with a divisor coming from a semiample, not necessarily complete, linear system without changing the singularities. This problem has been studied in \cite{semiample}.

\begin{definition}
Let $X$ be a normal projective variety.
Let $V_{\bullet}:= (V_m)_{m \in \N} \subseteq (|mM|)_{m \in \N}$ be a semiample graded linear system on $X$, where $M$ is a Cartier divisor on $X$.
We say $V_{\bullet}$ is \textbf{$\Z_{(p)}$-semiample} if there exists $m$ not divisible by $p$ such that $V_m$ is base point free.
\end{definition}

\begin{proposition}[{\cite[Proposition 2]{semiample}}]
Let $k$ be an $F$-finite field containing an infinite perfect field $k_0$ of characteristic $p>0$.
Let $X$ be a projective regular variety over $k$, and let $(X, \Delta)$ be a strongly $F$-regular pair, where $\Delta$ is an effective $\Q$-divisor.
Let $M$ be a semiample $\Q$-divisor on $X$.
Then, for $m \gg 0$, there exists an effective divisor $\Gamma_m \sim mM$ such that $\left(X, \Delta + \frac{1}{m}\Gamma_m\right)$ is strongly $F$-regular.
\end{proposition}

\begin{corollary} \label{semiample perturbations}
Let $k$ be an $F$-finite field containing an infinite perfect field $k_0$ of characteristic $p>0$.
Let $X$ be a projective regular variety over $k$, and let $(X, \Delta)$ be a strongly $F$-regular pair, where $\Delta$ is an effective $\Q$-divisor.
Let $V_{\bullet}:= (V_m)_{m \in \N} \subseteq (|mM|)_{m \in \N}$ be a semiample graded linear system on $X$, where $M$ is a Cartier divisor on $X$.
Then, for $m \gg 0$, there exists an effective divisor $\Gamma_m \in V_m$ such that $\left(X, \Delta + \frac{1}{m}\Gamma_m\right)$ is strongly $F$-regular.
Moreover, if $V_{\bullet}$ is $\Z_{(p)}$-semiample, then $\Gamma_m$ can be chosen in $V_m$ for some $m \gg 0$ not divisible by $p$.
\end{corollary}

\begin{proof}
The proof of \cite[Proposition 2]{semiample} carries over in the same way, taking divisors inside $V_{\bullet}$ instead of divisors $\Q$-equivalent to $L$. This can be done since $V_{\bullet}$ is semiample.
\end{proof}

We state the next result in the assumptions where it will be used later. 
In the original paper, it is proven in greater generality. It states how sharply $F$-pure singularities behave in families.

\begin{theorem}[{\cite[Corollary 3.31]{families}}] \label{families}
Let $f\colon X \to Z$ be a fibration between normal projective varieties over a perfect field of characteristic $p>0$ such that the geometric generic fibre $X_{\widebar{\eta}}$ is normal and $Z$ is $\Q$-Gorenstein.
Let $\Delta$ be an effective $\Z_{(p)}$-Weil divisor on $X$.
Suppose that $K_X + \Delta$ is $\Q$-Cartier and $(\Phi, \Delta_{\Phi})$ is sharply F-pure, where $\Phi$ is a normal fibre over a closed point, and $\Delta_{\Phi}$ is defined by $(K_X+\Delta)|_{\Phi}=K_{\Phi}+ \Delta_{\Phi}$.
Then, $(X_{\widebar{\eta}}, \Delta_{\widebar{\eta}})$ is sharply F-pure.
\end{theorem}

\subsection{Easy Additivity Theorems}

Let $f\colon X \rightarrow Z$ be a fibration between normal projective varieties. The Iitaka conjecture states a relation between the Kodaira dimensions of $X$, and $Z$ and the general fibre $\Phi$.
Some inequalities dealing with a similar relation have been known for a long time in any characteristic, and they hold more in general for the Iitaka dimension with respect to some line bundles. They are the so-called ``Easy Additivity'' theorems.

\begin{definition}
    A fibration $f\colon  X \rightarrow Z$ is called \textbf{separable} if the corresponding extension of the function fields $K(X)$ over $K(Z)$ admits a transcendence basis $t_1 , \dots, t_{\ell}$ such that $K(X)$ is a finite separable extension of $K(Z)(t_1, \dots, t_{\ell})$.
    Let $\widebar{\eta}$ be the geometric generic point of $Z$. Then $f$ is separable if and only if $X_{\widebar{\eta}}$ is reduced (\cite[Proposition 2.15, Ch.3]{Liu}).
\end{definition}

\begin{theorem}[{Easy Additivity, \cite[Theorem 6.12]{Ueno} or \cite[Lemma 2.3.31]{Cnm}}] \label{easy additivity}
Let $f\colon X \rightarrow Z$ be a separable fibration between normal projective varieties over a perfect field. Let $D$ be a $\Q$-Cartier divisor on $X$. Then
\[
\kappa(X, D) \leq \kappa(\Phi^{\nu}, D|_{\Phi^{\nu}}) + \mathrm{dim}(Z),
\]
where $\Phi$ is a general fibre of $f$, and $\Phi^{\nu}$ is its normalisation.
\end{theorem}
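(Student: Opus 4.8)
The plan is to reduce to a genuine line bundle and then bound the Iitaka dimension by the dimension of the image of an associated rational map. Since $\kappa(X, D)$ depends only on $D$ up to a positive multiple, after replacing $D$ by a suitable multiple I may assume $D$ is Cartier; the same multiple rescales $\kappa(F, D|_F)$ in the same way, and restriction to a general fibre commutes with the rescaling. If $\kappa(X, D) = -\infty$ the inequality is vacuous, so I may assume $\kappa := \kappa(X, D) \geq 0$. By the definition of Iitaka dimension as the maximal dimension of the images of the pluricanonical-type maps, there is an integer $m > 0$ with $h^0(X, mD) \neq 0$ such that the rational map $\phi := \phi_{|mD|} : X \dashrightarrow \mathbb{P}^N$, with $N = h^0(X, mD) - 1$, has image $Y := \overline{\phi(X)}$ of dimension exactly $\kappa$.

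The key construction is to compare $\phi$ and $f$ at once. I would form the product map
\[
\Phi = (f, \phi) : X \dashrightarrow Z \times \mathbb{P}^N,
\]
and set $W := \overline{\Phi(X)}$. To deal with indeterminacy I would first pass to a birational model $\mu : X' \to X$ resolving the base locus of $|mD|$ so that both $f \circ \mu$ and $\phi \circ \mu$ are morphisms; since $\mu_* \Ox{X'} = \Ox{X}$ one has $\kappa(X', \mu^* mD) = \kappa$ with the same image $Y$, and $\mu$ is an isomorphism over a general point of $Z$. The two projections then exhibit $W$ as dominating both $Z$ (because $f$ is surjective) and $Y$ (because $\phi$ is dominant onto $Y$).

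Now I chase dimensions. Since $W \to Y$ is surjective, $\kappa = \dim Y \leq \dim W$. Since $W \to Z$ is a surjective morphism of irreducible varieties, its general fibre $W_z$ satisfies $\dim W = \dim Z + \dim W_z$. For general $z \in Z$ the fibre $W_z$ is the closure of $\Phi(F_z)$, and since $\Phi$ sends $x \in F_z$ to $(z, \phi(x))$ this closure is identified with $\overline{\phi(F)}$ for the general fibre $F = F_z$. Finally, $\phi|_F$ is given by the image of the restriction map $H^0(X, mD) \to H^0(F, mD|_F)$, hence by a subsystem of $|mD|_F|$, so that
\[
\dim \overline{\phi(F)} \leq \dim \phi_{|mD|_F|}(F) \leq \kappa(F, mD|_F) = \kappa(F, D|_F).
\]
Combining the three inequalities yields $\kappa \leq \dim Z + \kappa(F, D|_F)$, which is the claim.

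The main obstacle is the identification $W_z = \overline{\phi(F)}$ for general $z$, i.e.\ that the general fibre of $W \to Z$ is precisely the $\phi$-image of the general fibre of $X \to Z$. This is exactly where passing to $X'$ matters: once $\Phi$ is a morphism, the identification follows from the standard fact that the general fibre of a dominant morphism of varieties has dimension equal to the difference of the dimensions, combined with $\mu$ being an isomorphism over general $z$, so that $\overline{(\phi \circ \mu)(F')} = \overline{\phi(F)}$. I expect no additional difficulty in positive characteristic here, since the argument uses only generic flatness and the behaviour of fibre dimensions for dominant morphisms, and never generic smoothness; in particular separability of $f$ is not needed for this inequality.
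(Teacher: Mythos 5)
The paper itself gives no proof of this statement: it is quoted directly from \cite[theorem 6.12]{Ueno} and \cite[lemma 2.3.31]{Cnm}. Your argument is essentially the classical proof found in those references: resolve the Iitaka map $\phi=\phi_{|mD|}$, form $\Phi=(f,\phi)$ with image $W\subseteq Z\times\mathbb{P}^N$, and chase dimensions via $\kappa=\dim Y\leq\dim W=\dim Z+\dim W_z$, bounding $\dim W_z$ by $\kappa(F,D|_F)$ through the observation that $\phi|_F$ is given by a linear subsystem of $|mD|_F|$ (hence factors through a linear projection of $\phi_{|mD|_F|}$). That skeleton, and your remark that separability of $f$ plays no role here, are all correct.

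There is, however, one genuine flaw: the claim that ``$\mu$ is an isomorphism over a general point of $Z$.'' The indeterminacy locus of $\phi$ has codimension $\geq 2$ in $X$, but as soon as the relative dimension of $f$ is at least $2$ it can perfectly well dominate $Z$ --- note that this theorem carries \emph{no} hypothesis on base loci; indeed, elsewhere in the paper the assumption that the stable base locus does not dominate $Z$ is precisely what has to be imposed by hand for the harder results, so you cannot smuggle it in here. When the indeterminacy locus dominates $Z$, the fibre $X'_z=\mu^{-1}(F)$ is a nontrivial modification of $F$ for \emph{every} $z$, and $W_z=\Phi'(X'_z)$ a priori also contains the images of the exceptional components over $F$, so the identification $W_z=\overline{\phi(F)}$ does not follow from the reasoning you give. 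The identification is nevertheless true for general $z$, by the standard constructible-image argument that should replace your step: the set $\Phi(U)$, where $U$ is the domain of definition of $\phi$, contains a dense open subset $W^{o}$ of $W$; every component of $W\setminus W^{o}$ dominating $Z$ has general fibre of dimension at most $\dim W-1-\dim Z$, whereas every irreducible component of a general fibre $W_z$ has dimension exactly $\dim W-\dim Z$; hence $W^{o}\cap W_z$ is dense in $W_z$ for general $z$, giving $W_z=\overline{\phi(U\cap F)}$. (The same fibre-dimension count shows $(X\setminus U)\cap F$ has codimension $\geq 2$ in $F$ for general $z$, so $U\cap F$ is dense in $F$ and your subsystem bound applies.) With this substitution your proof is complete and coincides with the cited classical one.
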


\begin{proof}
We follow the proof in \cite[Lemma 2.3.31]{Cnm}.
By possibly substituting $D$ with $mD$ for $m \gg 0$ we may suppose that $D$ is Cartier.
The inequality is obvious if $\kappa(X,D)=-\infty$. 
If $\kappa(X,D)=0$, then $\kappa(\Phi^{\nu}, D|_{\Phi^{\nu}}) \geq 0$, proving the inequality.

If $\kappa(X,D)>0$, then consider the rational map $\Phi_{|mD|} \colon X \dashrightarrow \Pn{N}{}$ induced by the linear system $|mD|$ for some $m \gg 0$ computing $\kappa(X, D)$, and define a morphism ${\varphi:= \Phi_{|mD|} \times f}$ so that we have the following diagram:
\[
\xymatrix{
X \ar@{-->}[r]^(.35){\varphi} \ar[d]_f & \Pn{N}{} \times Z \ar[r]^(.6){p_1} \ar[dl]^{p_2} & \Pn{N}{}\\
Z, & &
}
\]
where $p_1, p_2$ are the projections onto the two factors.
Let $Y$ be the image of $\varphi$. Note that, for a general point $z$ of $Z$, $Y_z:=Y \cap p_2^{-1}(z)=\varphi(f^{-1}(z))$. Hence we have
\[
\kappa(X,D)= \dim(p_1(Y)) \leq \dim(Y)= \dim(Z) + \dim(Y_z) \leq \dim(Z) + \kappa(\Phi^{\nu}, D|_{\Phi^{\nu}}).
\]
The last inequality follows from $H^0(\Phi^{\nu}, mD|_{\Phi^{\nu}}) \supseteq H^0(X, mD)|_{\Phi^{\nu}}$.
\end{proof}

\begin{theorem}[{Easy Additivity 2, \cite[Proposition 1]{fujita-notes}}] \label{other easy additivity}
Let $f\colon X \rightarrow Z$ be a fibration between normal projective varieties over a perfect field. Assume that the general fibre $\Phi$ is normal.
Let $D$ be an effective $\Q$-Cartier divisor on $X$, and let $H$ be a big $\Q$-Cartier divisor on $Z$. Then
\[
\kappa(X, D+f^*H) \geq \kappa(\Phi, D|_\Phi)+ \dim(Z).
\]
\end{theorem}

\subsection{Weak Positivity}

The main technical property studied in \cite{EG} is weak positivity of sheaves. We recall here what we need to use in the following.

\begin{definition} \label{wp def}
Let $X$ be a normal quasi-projective variety, and let $\mathcal{G}$ be a coherent sheaf on it.
We say that $\mathcal{G}$ is \textbf{generically globally generated} if the map
\[
H^0(X, \mathcal{G}) \otimes \Ox{X} \rightarrow \mathcal{G} 
\]
is surjective over the generic point of $X$.\\
The sheaf $\mathcal{G}$ is called \textbf{weakly positive} if for any ample divisor $A$ on $X$ and any natural number $\alpha$, there exists an integer $\beta>0$ such that $(\mathrm{Sym}^{\alpha \beta}(\mathcal{G}))^{**} \otimes \Ox{X}(\beta A)$ is generically globally generated, where the double star indicates the double dual.
\end{definition}

\begin{lemma} \label{l-perturbations for weak positivity}
Let $X$ be a normal quasi-projective variety, and let $\cG$ be a coherent sheaf on it.
Fix $n \in \Z_{>1}$.
If there exists a generically globally generated invertible sheaf $H$ with the property that for all $\alpha >0$, $\alpha \in \Z \setminus n\Z$, there is an integer $\beta>0$ such that $(\mathrm{Sym}^{\alpha \beta}(\mathcal{G}))^{**} \otimes \Ox{X}(\beta H)$ is generically globally generated, then $\cG$ is weakly positive.
\end{lemma}

\begin{proof}
By \cite[Remark 1.3$(ii)$]{Vie83}, it suffices to check the condition of Definition \ref{wp def} for one invertible sheaf, not necessarily ample, and all $\alpha \in \Z_{>0}$.
Fix $H$ invertible sheaf that is generically globally generated.
Assume that for all $\alpha >0$, $\alpha \in \Z \setminus n\Z$, there is an integer $\beta>0$ such that $(\mathrm{Sym}^{\alpha \beta}(\mathcal{G}))^{**} \otimes \Ox{X}(\beta H)$ is generically globally generated.

Let $\alpha'\in n\Z_{>0}$.
By the above assumption there is an integer $\beta>0$ such that $(\mathrm{Sym}^{(\alpha'+1)\beta}(\mathcal{G}))^{**} \otimes \Ox{X}(\beta H)$ is generically globally generated.
Let $\beta':=(\alpha'+1) \beta$.
Then
\[
(\mathrm{Sym}^{\alpha' \beta'}(\cG))^{**} \otimes \Ox{X}(\beta' H) = 
(\mathrm{Sym}^{\alpha'}((\mathrm{Sym}^{(\alpha'+1)\beta}\cG)^{**} \otimes \Ox{X}(\beta H) ))^{**} \otimes \Ox{X}(\beta H)
\]
is generically globally generated, whence the conclusion.
\end{proof}

\begin{lemma} \label{weakly positive and effectiveness}
Let $X$ be a normal quasi-projective variety and $\cG$ a coherent sheaf on it.
Assume that $\mathcal{G} = \Ox{X}(D)$ is an invertible sheaf. Then it is generically globally generated if and only if $D$ is linearly equivalent to an effective divisor. Moreover, it is weakly positive if and only if $D$ is pseudoeffective.
\end{lemma}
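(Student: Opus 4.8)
The plan is to prove the two equivalences separately, reducing both to what generic global generation means for a line bundle on an integral scheme.

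For the first equivalence, I would simply unwind the definition at the generic point $\eta$ of $X$. The stalk $\Ox{X}(D)_\eta$ is isomorphic to the function field $K(X)$, and the evaluation map $H^0(X, \Ox{X}(D)) \otimes \Ox{X} \to \Ox{X}(D)$ sends a global section $s$ to its germ $s_\eta$. Since $X$ is integral, the zero locus of a nonzero section of a line bundle is a proper closed subset and hence does not contain $\eta$, so $s_\eta$ is a nonzero element of $K(X)$; multiplying by $K(X)$ it already spans the whole stalk. Thus the map is surjective at $\eta$ precisely when $H^0(X, \Ox{X}(D)) \neq 0$. Finally, a nonzero global section of $\Ox{X}(D)$ is exactly the datum of a rational function $\phi$ with $\mathrm{div}(\phi) + D \geq 0$, i.e.\ of an effective divisor linearly equivalent to $D$, and conversely any effective $D' \sim D$ produces such a section. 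This yields: generically globally generated $\iff$ $D \sim$ effective.

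For the second equivalence I first reduce the symmetric powers to divisorial sheaves. For an invertible $\mathcal{G} = \Ox{X}(D)$ the power $\mathrm{Sym}^{mn}(\mathcal{G})$ is the tensor power $\Ox{X}(mnD)$, which is already reflexive, so the double dual is harmless and (up to agreement at $\eta$, all that matters for generic global generation) $(\mathrm{Sym}^{mn}(\mathcal{G}))^{**} \otimes \Ox{X}(mA) = \Ox{X}(m(nD+A))$. By the first part this is generically globally generated exactly when $m(nD+A)$ is linearly equivalent to an effective divisor. Hence weak positivity of $\Ox{X}(D)$ is equivalent to: for every ample $A$ and every $n \in \N$ there is $m>0$ with $m(nD+A) \sim$ effective, i.e.\ $nD+A$ is $\Q$-linearly equivalent to an effective $\Q$-divisor.

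It remains to identify this condition with pseudoeffectivity, and this matching is where the argument is most delicate, since one must keep track of the quantifier order and use the topology of $N^1(X)_{\R}$ (so the statement is read on projective $X$, where $\overline{\mathrm{Eff}}(X)$ is a genuine closed cone). For ``$D$ pseudoeffective $\Rightarrow$ weakly positive'', I would observe that for ample $A$ and any $n$ the class $nD+A$ is a sum of a pseudoeffective and an ample class, hence lies in the interior of $\overline{\mathrm{Eff}}(X)$, so it is big and in particular $\Q$-effective, giving the required $m$. For the converse, fix a single ample (Cartier) divisor $A_0$; weak positivity gives that $D + \tfrac{1}{n}A_0$ is $\Q$-effective, hence pseudoeffective, for every $n$, and letting $n \to \infty$ the classes converge to $D$, so closedness of $\overline{\mathrm{Eff}}(X)$ forces $D$ pseudoeffective. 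The main things to be careful about are that ``big $\Q$-Cartier divisor $\Rightarrow$ $\Q$-effective'' and the interior/closedness description of the pseudoeffective cone are being invoked, which is why projectivity is used.
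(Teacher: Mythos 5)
Your proof is correct and takes essentially the same approach as the paper's: the paper likewise treats the first claim as immediate from the definition, translates weak positivity into $\Q$-effectivity of $D+\tfrac{1}{n}A$ for all ample $A$ and all $n$, and then argues both directions via the closure of the $\Q$-effective cone. The only difference is one of detail: where the paper simply asserts that $D+tA$ is $\Q$-effective for $t \in \Q_{>0}$ when $D$ is pseudoeffective, you supply the justification (pseudoeffective plus ample lies in the big cone, and big $\Q$-Cartier divisors are $\Q$-effective).
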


\begin{proof}
The first claim follows directly from the definition. Let us prove the second claim.
The condition of being weakly positive translates to the fact that, for any $A$ ample and any $n \in \Z_{>0}$, $D+\frac{1}{n}A$ is $\Q$-effective. But then $D = \lim_{n \to \infty} \left(D + \frac{1}{n}A\right)$ is a limit of $\Q$-effective divisors, and thus it is pseudoeffective.
On the other hand, if $D$ is pseudoeffective, then it is in the closure of the $\Q$-effective cone. Consider the line defined by $D+tA$. For $t \in \Q_{>0}$, each of these divisors is $\Q$-effective, and thus $D$ is weakly positive.
\end{proof}

We state the next result in the assumptions where it will be used later. 
In the original paper, it is proven in greater generality.

\begin{theorem}[{\cite[Theorem 5.1 and Example 3.11]{3folds4}}] \label{weak positivity Eji17}
Let $f\colon  X \rightarrow Z$ be a surjective morphism between normal projective varieties over an algebraically closed field of characteristic $p>0$. 
Let $\Delta$ be an effective Weil divisor on $X$ such that $a\Delta$ is integral for some $a \in \Z_{>0}$ not divisible by $p$.
Let $\widebar{\eta}$ be the geometric generic point of $Z$.
Suppose that:
\begin{itemize}
\item[(i)] the geometric generic fibre $X_{\widebar{\eta}}$ is normal;
\item[(ii)] $K_{X_{\widebar{\eta}}} + \Delta_{\widebar{\eta}}$ is $\Z_{(p)}$-Cartier and ample;
\item[(iii)] $(X_{\widebar{\eta}}, \Delta_{\widebar{\eta}})$ is sharply F-pure.
\end{itemize}
Then 
\[
(f_*\Ox{X}(am(K_X+\Delta))) \otimes \omega_Z^{\otimes -am}
\]
is weakly positive for every $m\gg 0$.
\end{theorem}

\section{Proof of \texorpdfstring{$C_{n,1}^-$}{}}
\sectionmark{$C_{n,1}^-$}

In this section, we use some results from \cite{EG} to prove $C_{n,1}^-$ in positive characteristic when the target of the fibration $f\colon  X \rightarrow Z$ is a curve.
The authors of \cite{EG} assume that the relative anticanonical divisor is nef. 
We weaken a bit the assumption with $(iii)$ in Set-up \ref{setup1}.
The proofs carry on in a very similar way to \cite[Sections 3 and 4]{EG}.

We write here the assumptions we make for the first result.
\begin{setup} \label{setup1}
Let $f\colon X \rightarrow Z$ be a fibration between normal projective varieties over an algebraically closed field of characteristic $p>0$.
Consider $\Q$-divisors $\Delta$ and $D$ on $X$ and $Z$, respectively, such that $K_X + \Delta$ is $\Q$-Cartier and $D$ is $\Z_{(p)}$-Cartier. 
Let $\Delta= \Delta^+-\Delta^-$ be the decomposition by effective $\Q$-divisors with no common components, and let 
$
L:= -(K_X + \Delta) -f^*D.
$
Suppose that:
\begin{itemize}
\item[(i)] $f$ is equidimensional, and $K_Z$ is $\Z_{(p)}$-Cartier;
\item[(ii)] the general fibre $\Phi$ is regular, and $(\Phi, \Delta^+_\Phi)$ is strongly F-regular, where $\Delta^+_{\Phi}$ is defined by $(K_X+\Delta^+)|_{\Phi}=K_{\Phi}+\Delta^+_{\Phi}$ (this restriction is well-defined since $\Phi$ is normal);
\item[(iii)] there exists $\widebar{m}\in \Z_{>0}$ not divisible by $p$ such that $\Bs(\widebar{m}L)$ does not dominate $Z$; in particular, $L$ is $\Z_{(p)}$-effective;
\item[(iv)] $\supp(\Delta^-)$ does not dominate $Z$; 
\item[(v)] $\Delta$ is a $\Z_{(p)}$-divisor.
\end{itemize}
\end{setup}

\begin{remark} \label{r-separable fibration}
By \cite[Proposition 2.1]{LMMP} a fibration $f$ in Set-up \ref{setup1} is separable and has smooth geometric generic fibre.
\end{remark}

\begin{theorem} \label{weak positivity con hp base locus}
In Set-up \ref{setup1}, for all $l\gg 0$ sufficiently divisible such that $l$ is not divisible by $p$,
\[
\Ox{X}(l(-f^*(K_Z+D)+\Delta^-))
\]
is weakly positive.
\end{theorem}

\begin{proof}
First of all, note that we can assume that $K_X+\Delta$ is $\Z_{(p)}$-Cartier.
In fact, if $K_X+\Delta$ is not $\Z_{(p)}$-Cartier, let $\mu, e$ be the minimal positive integers, with $\mu$ not divisible by $p$, such that $\mu p^e(K_X+\Delta)$ is Cartier, and let $\Gamma \geq 0$ be a $\Z_{(p)}$-divisor such that $\Gamma \sim_{\Z_{(p)}} L$.
Define $\Delta':= \Delta+\frac{1}{p^{ee'}+1}\Gamma$, with $e' \gg 0$.
Since $D$ is $\Z_{(p)}$-Cartier, $K_X+\Delta'$ is $\Z_{(p)}$-Cartier, and all the other assumptions in Set-up \ref{setup1} are satisfied if we replace $\Delta$ with $\Delta'$.
In particular, $(ii)$ holds by Remark \ref{r-openness of SFR} if we choose $e'$ large enough.

Let $l$ be any positive integer not divisible by $p$ such that $l(K_X+ \Delta)$, $l(K_Z+D)$ are Cartier and $l\Delta$ is integral.

The rest of the proof follows the approach of \cite[Theorem 3.1]{EG}.
Set $\mathcal{F}:= \Ox{X}(l(-f^*(K_Z+D)+\Delta^-))$ and let $A$ be a very ample Cartier divisor on $X$.
We need to show the weak positivity of $\mathcal{F}$; in particular, by Lemma \ref{l-perturbations for weak positivity}, it suffices to see that for any $\alpha \in \Z_{>0} \setminus p\Z_{>0}$, there is some $\beta \in \Z_{>0}$ such that $\mathcal{F}^{[\alpha \beta ]}(\beta lA)$ is weakly positive.\\
The $\Q$-divisor $L+ \alpha^{-1}A$ is $\Z_{(p)}$-effective since both $L$ and $A$ are.
In particular, there exists $n \in \Z_{>0} \setminus p\Z_{>0}$ such that $|n(L+ \alpha^{-1}A)| \neq \emptyset$.
Since $\Bs(\widebar{m}L)$ does not dominate $Z$ and $\widebar{m}$ is not divisible by $p$, the graded linear system $\{ V_m:= |mn(L+ \alpha^{-1}A)|_{\Phi} \}_{m\in \N}$ is $\Z_{(p)}$-semiample for a general fibre $\Phi$. 
By Corollary \ref{semiample perturbations} there exists an effective $\Z_{(p)}$-divisor $\Gamma \sim_{\Z_{(p)}} (L + \alpha^{-1}A)$ such that $(\Phi, \Delta^+_\Phi + \Gamma|_\Phi)$ is still sharply F-pure.
By Theorem \ref{families}, this implies that $(X_{\widebar{\eta}}, (\Delta^+ + \Gamma)_{\widebar{\eta}})$ is sharply F-pure.
Note that $(K_X + \Delta^+ + \Gamma)_{\widebar{\eta}} \sim_{\Z_{(p)}} \alpha^{-1}A_{\widebar{\eta}}$ is ample.
Thus Theorem \ref{weak positivity Eji17} proves the weak positivity of the sheaf
\[
(f_*\Ox{X}(ll'm(K_X+\Delta^+ + \Gamma)))\otimes \omega_Z^{\otimes -ll'm}
\]
for all $m \gg 0$, where $l'$ is a positive integer not divisible by $p$ such that $ll'(\Delta^+ +\Gamma)$ is integral.
Then, as shown in \cite[Theorem 3.1]{EG}, for $\beta \gg 0$ sufficiently divisible,
since there is a generically surjective morphism
\[
f^*((f_*\Ox{X}(\alpha \beta l(K_X+\Delta^+ + \Gamma)))\otimes \omega_Z^{\otimes -\alpha \beta l}) \rightarrow \mathcal{F}^{[\alpha \beta]} \otimes \Ox{X}(\beta l A),
\]
the latter sheaf is weakly positive as well.
\end{proof}

\begin{setup} \label{setup2}
Let $f\colon X \rightarrow Z$ be a fibration between normal projective varieties over an algebraically closed field of characteristic $p>0$, and let $\Delta$ be an effective $\Z_{(p)}$-divisor on $X$ such that $K_X + \Delta$ is $\Q$-Cartier and $D$ is a $\Z_{(p)}$-Cartier divisor on $Z$. 
Let $L:=-(K_X+\Delta)-f^*D$.
Suppose that:
\begin{itemize}
\item[(i)] $f$ is equidimensional, and $K_Z$ is $\Z_{(p)}$-Cartier; 
\item[(ii)] the general fibre $\Phi$ is regular, and the pair $(\Phi, \Delta_\Phi)$ induced on it is strongly $F$-regular, where $\Delta_{\Phi}$ is defined by $(K_X+\Delta)|_{\Phi}=K_{\Phi}+\Delta_{\Phi}$;
\item[(iii)] there exists $\widebar{m} \in \Z_{>0}$ not divisible by $p$ such that $\Bs(\widebar{m}L)$ does not dominate $Z$; in particular, $L$ is $\Z_{(p)}$-effective.
\end{itemize} 
\end{setup}

All results in \cite[Section 4]{EG} hold with almost the same proofs in this Set-up \ref{setup2} using Theorem \ref{weak positivity con hp base locus} instead of \cite[Theorem 3.1]{EG}.

\begin{proposition} \label{p-4.4}
In Set-up \ref{setup2}, let $E$ be a $\Z_{(p)}$-Cartier divisor on $Z$.
Assume that there exists a $\Q$-Cartier $\Z_{(p)}$-divisor $\Gamma \geq 0$ on $X$ such that $\Gamma \sim_{\Z_{(p)}} L-f^*E$.
Then, for $\varepsilon \in \Z_{(p)>0}$ small enough, $\Ox{X}(-mf^*(K_Z+D+\varepsilon E))$ is weakly positive for any sufficiently divisible $m \in \Z_{>0}$ not divisible by $p$. 
\end{proposition}

\begin{proof} This proof follows \cite[Proposition 4.4]{EG}.
For any $\varepsilon \in \left[ 0, 1 \right) \cap \Z_{(p)}$, fix
\[
\Delta^{\varepsilon}:= \Delta + \varepsilon \Gamma; \quad D^{\varepsilon}:= D + \varepsilon E; \quad L^{\varepsilon}:= -(K_X+\Delta^{\varepsilon})-f^*D^{\varepsilon}.
\]
Note that $\Delta^{\varepsilon}$ is a $\Z_{(p)}$-divisor, $K_X+\Delta^{\varepsilon}$ is $\Q$-Cartier, and $D^{\varepsilon}$ is $\Z_{(p)}$-Cartier.
Moreover, there exists $\widebar{n} \in \Z_{>0}$ not divisible by $p$ such that $\widebar{n}(1-\varepsilon) \in \Z$, $\widebar{n}L^{\varepsilon} \sim \widebar{n}(1-\varepsilon)L$ and $\Bs(\widebar{n}\widebar{m}L^{\varepsilon})$ does not dominate $Z$.

Let $\Phi$ be a general fibre of $f$. Since $(\Phi, \Delta_{\Phi})$ is SFR, so is $(\Phi, \Delta^{\varepsilon}_{\Phi})$ for $\varepsilon$ small enough by Remark \ref{r-openness of SFR}, where $\Delta^{\varepsilon}_{\Phi}$ is defined by $(K_X+\Delta^{\varepsilon})|_{\Phi}=K_\Phi+ \Delta^{\varepsilon}_{\Phi}$.
We can thus apply Theorem \ref{weak positivity con hp base locus} to $f \colon X \to Z, \Delta^{\varepsilon}, D^{\varepsilon}$ to get that $\Ox{X}(-mf^*(K_Y+D^{\varepsilon}))$ is weakly positive for any sufficiently divisible $m \in \Z_{>0} \setminus p\Z_{>0}$.
\end{proof}

\begin{corollary} \label{c-4.5}
In Set-up \ref{setup2}, assume that there is an effective $\Z_{(p)}$-Cartier divisor $E$ on $Z$ such that $L-f^*E$ is $\Z_{(p)}$-equivalent to an effective $\Q$-Cartier divisor on $X$.
Then $E=0$.
\end{corollary}

\begin{proof}
Fix an ample Cartier divisor $A$ on $X$.
Applying Proposition \ref{p-4.4} with $D=-K_Z$, we see that $\Ox{X}(-mf^*E)$ is weakly positive for some $m \in \Z_{>0} \setminus p\Z_{>0}$. We want to show that $mf^*E=0$.
This is proven in the same way as in \cite[Corollary 4.5]{EG}.
\end{proof}

\begin{theorem} \label{t-4.2}
In Set-up \ref{setup2}, let $D=-K_Z$.
Let $\Phi := f^{-1}(z)$ be a closed fibre of $f$ over a regular point $z \in Z$.
Suppose the following conditions hold:
\begin{itemize}
    \item $f$ is flat at every point of $\Phi$;
    \item $\supp(\Delta)$ does not contain $\Phi$;
    \item $\Phi$ is normal.
\end{itemize}
Then the support of every effective $\Z_{(p)}$-divisor $\Gamma$ such that there exists $m_0 \in \Z_{>0}$ not divisible by $p$ with $m_0\Gamma \sim m_0 L$ does not contain $\Phi$. 
\end{theorem}

\begin{proof} This proof follows \cite[Theorem 4.2]{EG}.
First of all, note that by the same considerations as in the proof of Theorem \ref{weak positivity con hp base locus} we can assume that $K_X+\Delta$ is $\Z_{(p)}$-Cartier.
Assume that there exists $\Gamma$ such that $\Gamma \sim_{\Z_{(p)}} L$ and $\supp(\Gamma)$ contains $\Phi$.
Consider the diagram
\[
\xymatrix{
H \subseteq X' \ar@<10pt>[d]_{f'} \ar[r]^{\pi} & X \supseteq \Phi \ar@<-10pt>[d]^f \\
E \subseteq Z' \ar[r]^{\mu} & Z \ni z,
}
\]
where
\begin{itemize}
\item[•] $\mu \colon  Z' \rightarrow Z$ is the blow-up at $z$;
\item[•] $\pi\colon  X' \rightarrow X$ is the blow-up at $\Phi:=f^{-1}(z)$; note that, by flatness of $f$ near $z$ and normality of the fibres, $X'$ is normal and coincides with $X \times_Z Z'$;
\item[•] $H:= \Exc(\pi) \cong \Phi \times E = f'^*E$ with $E:= \Exc(\mu)$.
\end{itemize}
Let $L':= -\pi^*(K_X+ \Delta)+f'^*K_{Z'} = \pi^*L$. Then $L'$ is $\Z_{(p)}$-effective, and $\Bs(\widebar{m}L')$ does not dominate $Z'$.
Let $c \in \Z_{(p)}$ be the coefficient of $H$ in $\pi^*\Gamma$; by assumption it is $>0$.
Then $L'-f'^*(c E) \sim_{\Z_{(p)}} \pi^*\Gamma-c H \geq 0$. By Corollary \ref{c-4.5}, $c E=0$, which is a contradiction.
\end{proof}

\begin{corollary} \label{restriction EG}
In Set-up \ref{setup2}, let $D=-K_Z$. Then the restriction map
\[
\alpha\colon  \bigoplus_{m \in \N \setminus p\N} H^0(X, ml L) \rightarrow \bigoplus_{m \in \N \setminus p\N} H^0(\Phi, mlL|_\Phi)
\]
is injective, where $l$ is a positive integer not divisible by $p$ such that $H^0(X,lL) \neq 0$.
In particular,
\[
\kappa(X, L) \leq \kappa(\Phi, L|_{\Phi}).
\]
\end{corollary}

\begin{proof} This proof follows \cite[Corollary 4.7]{EG}.
If $\alpha$ were not injective, there would be a section of $\bigoplus_{m \in \N \setminus p\N} H^0(X, mlL)$ whose zero locus contains the fibre $\Phi$ in its support.
So it suffices to show that for every effective divisor $\Gamma \sim_{\Z_{(p)}} L$, $\supp(\Gamma)$ does not contain $\Phi$. Therefore we conclude by Theorem \ref{t-4.2}.

Now let $m \in \Z_{>0}$ such that $mL$ computes the Iitaka dimension. If $p | m$, then since $L$ is $\Z_{(p)}$-effective, there is $n$ coprime with $p$ such that $H^0(X,nL) \neq 0$. Thus $(m+n)L$ computes the Iitaka dimension as well, and $m+n$ is not divisible by $p$.
Therefore we conclude by the first part.
\end{proof}

With this last result, it is straightforward to prove $C_{n,1}^-$.

\begin{proof}[Proof of \ref{Cn1-}.]
First of all, we can assume that the base field $k$ is algebraically closed.
Indeed, if not, consider the base change of the setting with its algebraic closure $\widebar{k}$.
By the flat base change theorem, if $\mathcal{L}$ is a line bundle on $X$ over $k$ and $\widebar{\mathcal{L}}$ is its base change on $\widebar{X}:= X \times_k \widebar{k}$ over $\widebar{k}$, then $\kappa(X, \mathcal{L})= \kappa(\widebar{X}, \widebar{\mathcal{L}})$.

Note that we can assume that $\kappa(X, -(K_X+\Delta)) \geq 0$.
We distinguish three cases according to the genus of $Z$.
\begin{itemize}
\item[•] Let $Z$ be a curve of genus $0$.\\
Then the result follows immediately from Easy Additivity Theorem \ref{easy additivity}.
\item[•] Let $Z$ be a curve of genus $1$.\\
Then $K_Z \sim 0$, so Corollary \ref{restriction EG} gives the desired inequality.
\item[•] Let $Z$ be a curve of genus $\geq 2$.\\
By the Riemann-Roch Theorem on curves, since $K_Z$ is ample, it is also $\Z_{(p)}$-effective, and thus $-(K_X+ \Delta)+f^*K_Z$ is $\Z_{(p)}$-effective.
This also implies that there exists $\widebar{m}' \in \Z_{>0}$ not divisible by $p$ such that $\Bs(-\widebar{m}'(K_X+ \Delta)+\widebar{m}'f^*K_Z) \subseteq \Bs(-\widebar{m}'(K_X+ \Delta))$ does not dominate $Z$.
Thus we can apply Easy Additivity 2 Theorem \ref{other easy additivity} and Corollary \ref{restriction EG} to get
\[
\kappa(\Phi, -(K_\Phi+\Delta_\Phi)) + \dim(Z) \leq \kappa(X, -(K_X+\Delta)+ f^*K_Z) \leq \kappa(\Phi, -(K_\Phi+\Delta_\Phi)),
\]
which is a contradiction. Such a case never happens.
\end{itemize}
\end{proof}

\section{Partial Results on \texorpdfstring{$C_{n,n-1}^-$}{}}
\sectionmark{$C_{n,n-1}^-$}

The goal of this section is the proof of $C_{n,n-1}^-$ in positive characteristic when the relative dimension of the fibration is one and the target has zero anticanonical Iitaka dimension.
In particular, the main result is an injectivity theorem like Corollary \ref{restriction EG} for fibrations of relative dimension $1$ and it is an analogue of \cite[Theorem 3.8]{anti}.
To prove it in characteristic $0$, the author uses techniques involving canonical bundle formula results as in \cite{Ambro}.
In positive characteristic, we do not have the same results in such generality. 
Nonetheless, when the general fibre is a curve, the results in \cite{Jakub} are sufficient for our purposes.

\begin{proposition}[{\cite[Proposition 3.2]{Jakub}}] \label{Jakub}
Let $(X, \Delta)$ be a quasi-projective log pair over an algebraically closed field of 
characteristic $p>0$ with $\Delta$ an effective $\Q$-divisor on $X$ such that $K_X+\Delta$ is $\Q$-Cartier.
Let $f\colon  X \rightarrow Z$ be a fibration onto a normal variety. 
Assume that the geometric generic fibre $X_{\widebar{\eta}}$ is a smooth curve, $(X_{\widebar{\eta}}, \Delta_{\widebar{\eta}})$ is log canonical, and $K_X+ \Delta \sim_{\Q} f^*L_Z$ for some $\Q$-Cartier divisor $L_Z$ on $Z$. 
Then
\[
L_Z \sim_{\Q} K_Z + \Delta_Z,
\]
where $\Delta_Z$ is an effective $\Q$-divisor.
\end{proposition}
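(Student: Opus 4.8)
The plan is to prove a canonical bundle formula for the curve fibration $f$, namely to write $L_Z \sim_{\Q} K_Z + B_Z + M_Z$ with a discriminant part $B_Z$ and a moduli part $M_Z$, and then to verify that under the present hypotheses the divisor $\Delta_Z := B_Z + M_Z$ can be taken effective. First I would restrict to the geometric generic fibre. Since $K_X + \Delta = f^* L_Z$ and the pullback of $L_Z$ vanishes on a fibre, restricting to $C := X_{\bar{\eta}}$ gives $K_C + \Delta|_C \sim_{\Q} 0$. As $C$ is a smooth curve and $\Delta|_C \geq 0$ is log canonical, hence has coefficients at most $1$, taking degrees yields $\deg(\Delta|_C) = 2 - 2g(C) \geq 0$, so $g(C) \in \{0, 1\}$. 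Thus $f$ is a relatively log Calabi--Yau fibration in rational or genus-one curves, and I would treat these two cases in parallel.

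Next I would construct the discriminant. For every prime divisor $P \subseteq Z$ set $t_P := \sup\{ t : (X, \Delta + t f^* P)\text{ is log canonical over the generic point of }P\}$ and $B_Z := \sum_P (1 - t_P) P$, the sum being finite since over a general point of $Z$ the fibre is reduced and the pair is log canonical. The key point is that $B_Z \geq 0$: over the generic point of $P$ we may write $f^* P = \sum_i m_i E_i$ with each vertical component $E_i$ appearing with multiplicity $m_i \geq 1$, so the log canonical inequality along $E_i$ forces $t_P \leq (1 - \mathrm{coeff}_{E_i}(\Delta))/m_i \leq 1$, whence $1 - t_P \geq 0$. Defining $M_Z := L_Z - K_Z - B_Z$ we automatically have $K_X + \Delta \sim_{\Q} f^*(K_Z + B_Z + M_Z)$, and the whole statement reduces to showing that $M_Z$ is $\Q$-linearly equivalent to an effective divisor.

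The effectivity of the moduli part is the crux, and this is where I expect the main difficulty. By the semipositivity results available in positive characteristic for fibred surfaces and families of curves (in the spirit of \cite{EG} and Patakfalvi-type theorems), $M_Z$ is nef, but nefness alone does not give effectivity. The extra input comes from the one-dimensionality of the fibres: in the genus-one case $M_Z$ is, up to a universal denominator, the class of the Hodge line bundle $f_* \omega_{X/Z}$, and Kodaira's formula identifies a positive multiple of it with the effective discriminant divisor of the singular fibres, so $M_Z$ is $\Q$-effective; in the genus-zero case the moduli are governed by the positions of the weighted marked points cut out by the horizontal part of $\Delta$, and the moduli class is a pullback from a rational moduli space of pointed curves, hence again $\Q$-effective. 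Combining $B_Z \geq 0$ with the $\Q$-effectivity of $M_Z$ produces an effective $\Delta_Z \sim_{\Q} B_Z + M_Z$ with $L_Z \sim_{\Q} K_Z + \Delta_Z$, as desired.

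The main obstacle, which forces control on the characteristic, is precisely this effectivity step. In small characteristics wild ramification of $f$, inseparability of the moduli and $j$-maps, and the failure of Kodaira's formula in its classical shape obstruct the identification of $M_Z$ with an effective discriminant; one must therefore either restrict the characteristic or replace these classical arguments by their F-singularity and weak-positivity counterparts. I would accordingly expect the genus-one analysis, rather than the boundary computation, to carry the real weight of the proof, with the reduction to $g(C) \le 1$ and the effectivity of $B_Z$ being the routine parts.
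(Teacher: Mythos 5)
You should first be aware that the paper itself contains no proof of this statement: it is imported verbatim from \cite{Jakub} (Witaszek, proposition 3.2), so your attempt has to be measured against that external proof, whose entire purpose is to establish a canonical bundle formula \emph{without} the characteristic-$0$ inputs. Within that framing, the parts of your proposal that you yourself call routine are indeed correct and standard: restricting to the geometric generic fibre to get $K_C+\Delta|_C\sim_{\Q}0$ and hence $g(C)\le 1$; defining the discriminant $B_Z$ via log canonical thresholds over codimension-one points of $Z$; and the coefficient computation giving $t_P\le 1$, i.e.\ $B_Z\ge 0$. (One small point you gloss over: well-definedness and finiteness of $B_Z$ need log canonicity of $(X,\Delta)$ over an open subset of $Z$, which the hypotheses give only on the geometric generic fibre; in relative dimension one this can be repaired by spreading out a log resolution of the generic fibre, but it is an argument, not a given.)

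The genuine gap is exactly the step you defer: $\Q$-effectivity of the moduli part $M_Z$. All three mechanisms you invoke for it are the classical characteristic-$0$ ones, and they are precisely what is unavailable here. First, the claim that $M_Z$ is nef ``by the semipositivity results available in positive characteristic'' is unjustified: the weak-positivity theorems of \cite{EG} and Patakfalvi/Ejiri type all require $F$-singularity hypotheses on the fibres (sharply $F$-pure or strongly $F$-regular), which are strictly stronger than the log canonicity assumed in the proposition --- for instance a supersingular elliptic fibre is log canonical but not $F$-split --- and semipositivity statements of this kind do fail in positive characteristic, a failure which is the engine behind the Tango--Raynaud counterexamples in the last section of this very paper. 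Second, in the genus-one case, the ``Kodaira formula'' is in characteristic $p$ the Bombieri--Mumford/Katsura--Ueno formula, which is proved for elliptic \emph{surfaces}, i.e.\ over a one-dimensional base, and carries wild-fibre correction terms in characteristics $2$ and $3$; the application in this paper (proposition \ref{thm. 3.8}, where $X$ is a threefold) needs the statement for $\dim Z=2$, so quoting the surface formula does not suffice. Third, in the genus-zero case, turning the degree-two horizontal boundary into marked sections so as to pull back from a moduli space requires covering tricks that in characteristics $2$ and $3$ are wildly ramified or inseparable, and the resulting moduli map may itself be inseparable --- obstructions you name but do not overcome. Your final paragraph concedes all of this, which is honest, but it means the proposal reduces the proposition to its hard core and then stops; that core is the actual content of \cite{Jakub}, which obtains effectivity of $B_Z+M_Z$ directly by arguments specific to positive characteristic rather than by nefness of $M_Z$. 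As it stands, the proposal is a correct reduction, not a proof.
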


In this section, we work in the following setting.
\begin{setup} \label{setup3}
Let $f\colon X \rightarrow Z$ be a fibration between normal projective varieties over an algebraically closed field $k$ of characteristic $p>0$.
Suppose that $f$ is of relative dimension one.
Let $\Delta$ be an effective $\Z_{(p)}$-divisor on $X$ such that 
$K_X+ \Delta$ is $\Q$-Cartier and let $D$ be a $\Q$-Cartier divisor on $Z$.
Let $L:= -(K_X + \Delta) - f^*D$ and assume that:
\begin{itemize}
\item[(i)] $Z$ is $\Q$-Gorenstein;
\item[(ii)] the general fibre $\Phi$ is regular, and the pair $(\Phi, \Delta_\Phi)$ is strongly F-regular, where $\Delta_{\Phi}$ is defined by $(K_X+\Delta)|_{\Phi}= K_{\Phi}+\Delta_{\Phi}$;
\item[(iii)] there exists $\widebar{m}\in \Z_{>0}$ not divisible by $p$ such that $\Bs(\widebar{m}L)$ does not dominate $Z$, in particular,  $L$ is $\Z_{(p)}$-effective.
\end{itemize}
\end{setup}

\begin{proposition} \label{thm. 3.8}
In Set-up \ref{setup3}, $-K_Z-D$ is $\Q$-effective.
\end{proposition}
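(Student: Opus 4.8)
The plan is to reduce the statement to Proposition \ref{Jakub}, applied not to $(X,\Delta)$ directly but to a pair obtained by adding to $\Delta$ a well-chosen effective representative of $L$. Since $L$ is $\Q$-effective, I would fix an effective $\Q$-divisor $E \sim_{\Q} L$. By the definition of $L$ this gives
\[
K_X + \Delta + E \sim_{\Q} -f^*D = f^*(-D),
\]
so that $(X,\Delta+E)$ together with the divisor $-D$ on the base is the natural candidate for the input of Proposition \ref{Jakub}. Its output (which is phrased, and proved, up to $\Q$-linear equivalence) would read $-D \sim_{\Q} K_Z + \Delta_Z$ with $\Delta_Z \geq 0$, that is $-K_Z - D \sim_{\Q} \Delta_Z \geq 0$, which is precisely the asserted $\Q$-effectivity.

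To invoke Proposition \ref{Jakub} for $(X,\Delta+E)$ with $L_Z = -D$ I must verify its hypotheses. The divisor $K_X + \Delta + E$ is $\Q$-Cartier, being $\Q$-linearly equivalent to the $\Q$-Cartier divisor $f^*(-D)$ (equivalently, $E$ is $\Q$-Cartier since a suitable integral multiple is linearly equivalent to the corresponding multiple of the $\Q$-Cartier divisor $L$); the geometric generic fibre $X_{\bar{\eta}}$ is a smooth curve by the standing assumption of setting \ref{setup3}. The only substantial point is therefore to guarantee that $(X_{\bar{\eta}}, (\Delta+E)|_{X_{\bar{\eta}}})$ is log canonical, and here the specific choice of the effective representative $E$ is essential. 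I expect this control of the singularities on the geometric generic fibre to be the main obstacle.

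For the log canonicity I would first note that, since $\sB(L)$ does not dominate $Z$, for a general fibre $F$ one has $\sB(L|_F) = \emptyset$, so $L|_F$ is semiample; as $F$ is a curve it is either numerically trivial (degree $0$) or ample (positive degree). If $\deg(L|_F)=0$, then any effective $E \sim_{\Q} L$ satisfies $\deg(E|_F)=0$ with $E|_F \geq 0$, forcing $E|_F = 0$; hence $E$ is vertical and $(\Delta+E)|_{X_{\bar{\eta}}} = \Delta|_{X_{\bar{\eta}}}$, which is strongly $F$-regular, hence log canonical, by the hypothesis on the general fibre propagated through Theorem \ref{families}. If $\deg(L|_F)>0$, then $L|_F$ is ample, and I would run the argument of Lemma \ref{sharply F-pure on fibres} for the effective $\Q$-Cartier divisor $E$ (whose restriction to $F$ is ample) to replace $E$ by an effective $\Gamma \sim_{\Q} E \sim_{\Q} L$ such that $(F, \Delta|_F + \Gamma|_F)$ is sharply $F$-pure, hence log canonical. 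In either case Theorem \ref{families} spreads sharp $F$-purity of the general closed fibre to the geometric generic fibre, so that $(X_{\bar{\eta}}, (\Delta+\Gamma)|_{X_{\bar{\eta}}})$ is log canonical.

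With such a representative in hand, $K_X + \Delta + \Gamma \sim_{\Q} f^*(-D)$ and every hypothesis of Proposition \ref{Jakub} is met for $(X,\Delta+\Gamma)$ and $L_Z = -D$; the conclusion $-D \sim_{\Q} K_Z + \Delta_Z$ with $\Delta_Z \geq 0$ then gives $-K_Z-D \sim_{\Q} \Delta_Z \geq 0$. As anticipated, the delicate point is producing the effective representative of $L$ whose restriction preserves log canonicity of the generic fibre: it forces the distinction according to $\deg(L|_F)$ and combines the semiampleness of $L$ on fibres (coming from the stable base locus hypothesis), a Bertini-type perturbation on the fibre in the style of Lemma \ref{sharply F-pure on fibres}, and the spreading-out of sharp $F$-purity from closed fibres to $X_{\bar{\eta}}$ via Theorem \ref{families}.
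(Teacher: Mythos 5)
Your proof is correct and follows the same overall strategy as the paper's: produce an effective $\Gamma \sim_{\Q} L$ such that $(F, \Delta|_F + \Gamma|_F)$ is sharply $F$-pure, propagate this to the geometric generic fibre via Theorem \ref{families}, and feed the pair $(X, \Delta+\Gamma)$ with $K_X+\Delta+\Gamma \sim_{\Q} -f^*D$ into Proposition \ref{Jakub}. The one point where you diverge is how $\Gamma$ is produced: the paper invokes a single external result, \cite[proposition 3]{semiample}, which handles a semiample $L|_F$ directly, whereas you exploit the fact that $F$ is a curve, so that semiample means either numerically trivial or ample, and treat the two cases separately --- in the degree-zero case any effective representative $E \sim_{\Q} L$ restricts to zero on the general fibre, hence is vertical (a horizontal component would meet every fibre) and does not perturb the boundary on $X_{\bar{\eta}}$ at all, while in the positive-degree case you fall back on the paper's own Bertini-type Lemma \ref{sharply F-pure on fibres}. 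This case split is sound, and it buys self-containedness: you only use tools already present in the paper, at the cost of an argument that works only because the fibres are one-dimensional --- which is exactly the standing assumption of setting \ref{setup3}. The remaining caveats are shared with the paper's own proof and are not gaps specific to your argument: Proposition \ref{Jakub} is applied with $\Q$-linear equivalence in place of the stated equality $K_X+\Delta = f^*L_Z$ (harmless, since its conclusion is itself up to $\Q$-linear equivalence), and Theorem \ref{families} formally requires a prime-to-$p$ index for the boundary, which is arranged by a small perturbation as in Remark \ref{index divisible by p}.
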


\begin{proof}
Pick a general fibre $\Phi$.
By condition $(iii)$ in Set-up \ref{setup3}, the graded linear system $(V_{m})_{m \in \N}$ with $V_m:=|mL|_{\Phi} \subseteq |mL_\Phi|$ is $\Z_{(p)}$-semiample.
By Corollary \ref{semiample perturbations} there exists an effective $\Z_{(p)}$-divisor $\Gamma \sim_{\Z_{(p)}} L$ on $X$ such that $(\Phi, \Delta_{\Phi}+ \Gamma|_\Phi)$ is sharply F-pure.
Note that since $\Phi$ is a regular curve, $(\Phi, \Delta_{\Phi}+ \Gamma|_{\Phi})$ is log smooth.
Moreover, $K_X+ \Delta + \Gamma \sim_{\Z_{(p)}} -f^*D$. 
Let $X_{\widebar{\eta}}$ be the geometric generic fibre of $f$.
By Theorem \ref{families}, $(X_{\widebar{\eta}}, (\Delta+ \Gamma)_{\widebar{\eta}})$ is sharply F-pure as well, where $(\Delta+ \Gamma)_{\widebar{\eta}}$ is defined by $(K_X+\Delta+\Gamma)|_{X_{\widebar{\eta}}}= K_{X_{\widebar{\eta}}} + (\Delta+ \Gamma)_{\widebar{\eta}}$.
Therefore we can apply Proposition \ref{Jakub} to find an effective $\Q$-divisor $\Delta_Z$ on $Z$ such that $-K_Z -D \sim_{\Q} \Delta_Z$.
\end{proof}

\begin{remark}
Applying Proposition \ref{thm. 3.8} to the case $D=0$, we conclude that under the assumptions of Set-up \ref{setup3}, $-K_Z$ is $\Q$-effective whenever $-(K_X+ \Delta)$ is.
\end{remark}

Following the proofs of \cite[Propositions 4.2 and 4.3]{anti}, we get the same results in positive characteristic in the more restrictive Set-up \ref{setup3}.

\begin{proposition} \label{prop. 4.2}
In Set-up \ref{setup3}, let $E$ be a $\Q$-Cartier divisor on $Z$.
Assume that there exists $\Gamma$, an effective $\Q$-divisor on $X$, such that $L-f^*E \sim_{\Z_{(p)}} \Gamma$.
Then, for $0< \varepsilon \ll 1$, $-K_Z-D-\varepsilon E$ is $\Q$-effective.
\end{proposition}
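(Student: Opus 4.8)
The plan is to reduce the statement to the argument already carried out for proposition \ref{thm. 3.8}, applied not to the divisor $L$ but to its perturbation
\[
L_\varepsilon := L - \varepsilon f^*E = -(K_X+\Delta) - f^*(D+\varepsilon E),
\]
together with the base divisor $D+\varepsilon E$, for $0<\varepsilon \ll 1$. If I can verify that $L_\varepsilon$ satisfies the hypotheses that the proof of proposition \ref{thm. 3.8} actually uses, then that argument produces an effective $\Q$-divisor $\Delta_Z$ on $Z$ with $-K_Z-(D+\varepsilon E) \sim_{\Q} \Delta_Z$, which is precisely what is claimed.

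First I would check $\Q$-effectiveness of $L_\varepsilon$. Since $L$ is $\Q$-effective by setting \ref{setup3} and $L-f^*E \sim_{\Q} \Gamma \geq 0$ by hypothesis, the identity $L_\varepsilon = (1-\varepsilon)L + \varepsilon(L-f^*E)$ exhibits $L_\varepsilon$ as a positive combination of $\Q$-effective divisors, hence $\Q$-effective for every $\varepsilon \in (0,1)$. The crucial second point is the restriction to a general fibre: choosing $F$ over a general point of $Z$, in particular one avoiding $\supp(E)$, we get $(f^*E)|_F = 0$, so $L_\varepsilon|_F = L|_F$, and $L|_F$ is semiample because $\sB(L)$ does not surject onto $Z$. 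The remaining data of setting \ref{setup3}, namely that $K_X+\Delta$ is $\Q$-Cartier and $(F,\Delta|_F)$ is strongly $F$-regular, are untouched by the perturbation.

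With these checks in place, I would simply rerun the steps of proposition \ref{thm. 3.8}: semiampleness of $L_\varepsilon|_F$ and strong $F$-regularity of $(F,\Delta|_F)$ give, via \cite[proposition 3]{semiample}, an effective $\Gamma' \sim_{\Q} L_\varepsilon$ on $X$ with $(F,\Delta|_F+\Gamma'|_F)$ sharply $F$-pure; theorem \ref{families} then propagates sharp $F$-purity to the geometric generic fibre $X_{\bar{\eta}}$; and since
\[
K_X+\Delta+\Gamma' \sim_{\Q} -f^*(D+\varepsilon E) = f^*\bigl(-(D+\varepsilon E)\bigr),
\]
with $X_{\bar{\eta}}$ a smooth curve and $(X_{\bar{\eta}}, (\Delta+\Gamma')|_{X_{\bar{\eta}}})$ log canonical (by remark \ref{klt and SFR}), proposition \ref{Jakub} yields $-(D+\varepsilon E) \sim_{\Q} K_Z+\Delta_Z$ with $\Delta_Z \geq 0$. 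Equivalently $-K_Z-D-\varepsilon E \sim_{\Q} \Delta_Z$ is $\Q$-effective.

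The main obstacle, and the reason one cannot literally quote proposition \ref{thm. 3.8} as a black box, is condition (iii) of setting \ref{setup3} on the stable base locus. Although $L_\varepsilon$ is $\Q$-effective, one only has $\sB(L_\varepsilon) \subseteq \sB(L) \cup \supp(\Gamma)$, and $\supp(\Gamma)$ need not be vertical, so $\sB(L_\varepsilon)$ may well surject onto $Z$. The point I would stress is that condition (iii) feeds into the proof of proposition \ref{thm. 3.8} only through semiampleness of the restriction to a general fibre, and this survives the perturbation precisely because $(f^*E)|_F = 0$. Accordingly I would present the proof as a rerun of the earlier argument with $L_\varepsilon$ in place of $L$, rather than as a formal application of the earlier statement.
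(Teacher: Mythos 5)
Your argument correctly identifies that condition (iii) of setting \ref{setup3} fails for $L_\varepsilon = L - \varepsilon f^*E$, but the way you dismiss the problem --- the claim that (iii) enters the proof of proposition \ref{thm. 3.8} only through semiampleness of the restriction to a general fibre --- is false, and this is a genuine gap. The proof of proposition \ref{thm. 3.8} needs a \emph{global} effective $\Q$-divisor $\Gamma' \sim_{\Q} L_\varepsilon$ on $X$ whose restriction to $F$ keeps the pair sharply $F$-pure; this global $\Gamma'$ is what feeds into proposition \ref{Jakub} via $K_X+\Delta+\Gamma' \sim_{\Q} -f^*(D+\varepsilon E)$. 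Producing such a representative is exactly where the base-locus hypothesis is used: it guarantees $\sB(L) \cap F = \emptyset$ for general $F$, so that members of $|mL|$ cut out a base point free linear system on $F$ and a Bertini-type argument applies. For $L_\varepsilon$ no such argument is available: $\sB(L_\varepsilon)$ can contain the horizontal components of $\Gamma$, so every effective representative of $L_\varepsilon$ may be forced to contain $\varepsilon\Gamma$ (for instance when the horizontal part of $\Gamma$ is rigid), and in positive characteristic $\Gamma|_F$ can be bad even for a \emph{general} fibre: a component of $\Gamma$ that is inseparable over $Z$ meets every fibre in a point of multiplicity $p$, so the restricted pair can fail to be log canonical, hence sharply $F$-pure. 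Semiampleness of $L_\varepsilon|_F = L|_F$ gives no control over this forced contribution, so \cite[proposition 3]{semiample} cannot be invoked for $L_\varepsilon$ on the strength of the hypotheses you verify.

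The missing ingredient --- which is the entire content of the paper's short proof, following \cite[proposition 4.2]{anti} --- is to absorb the perturbation into the \emph{boundary} rather than into $L$: since $(F,\Delta|_F)$ is strongly $F$-regular and $\Gamma$ is effective, the pair $(F,(\Delta+\varepsilon\Gamma)|_F)$ is still SFR for $0<\varepsilon\ll 1$; strong $F$-regularity, like klt-ness, absorbs small effective perturbations, whereas sharp $F$-purity does not, which is why your decomposition cannot be patched term by term. Setting $\Delta_\varepsilon := \Delta+\varepsilon\Gamma$ and $D_\varepsilon := D+\varepsilon E$, one computes
\[
-(K_X+\Delta_\varepsilon)-f^*D_\varepsilon \sim_{\Q} (1-\varepsilon)L,
\]
which is $\Q$-effective and satisfies $\sB((1-\varepsilon)L)=\sB(L)$, not dominating $Z$. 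Hence the pair $(X,\Delta_\varepsilon)$ with base divisor $D_\varepsilon$ satisfies \emph{all} the conditions of setting \ref{setup3}, including (iii), and proposition \ref{thm. 3.8} applies as a black box to yield $\Q$-effectivity of $-K_Z-D-\varepsilon E$. This single observation both closes your gap and makes the rerun of the earlier proof unnecessary.
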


\begin{proof}
The proof follows that of \cite[Proposition 4.2]{anti}.
By Remark \ref{r-openness of SFR} we can choose $0<\varepsilon \ll 1$ such that $\varepsilon \Gamma$ is a $\Z_{(p)}$-divisor and $(\Phi, \Delta_\Phi + \varepsilon \Gamma|_{\Phi})$ is SFR.
Choose $\varepsilon \in \Q$ with denominator not divisible by $p$.
Define 
\[
\Delta_{\varepsilon}:= \Delta + \varepsilon \Gamma, \quad D_{\varepsilon}:= D + \varepsilon E, \quad L_{\varepsilon}:= -(K_X+\Delta_{\varepsilon})-f^*D_{\varepsilon}. 
\]
Note that $\Delta_{\varepsilon}$ is an effective $\Z_{(p)}$-divisor and there exists $\widebar{n} \in \Z_{>0}$ not divisible by $p$ such that $\Bs(\widebar{n}\widebar{m}L^{\varepsilon})$ does not dominate $Z$.
Now apply Proposition \ref{thm. 3.8} to $(X, \Delta_{\varepsilon})$, $D_{\varepsilon}$, $L_{\varepsilon}$.
\end{proof}

\begin{theorem} \label{Cn,n-1-}
In Set-up \ref{setup3}, assume that $\kappa(Z, -K_Z-D)=0$.
Then the map defined by restriction on a general fibre $\Phi$,
\[
\alpha\colon  \bigoplus_{m \in \N \setminus p\N} H^0(X, mlL ) \rightarrow \bigoplus_{m \in \N  \setminus p\N} H^0(\Phi, mlL|_\Phi)
\]
is injective, where $l$ is a positive integer not divisible by $p$ such that $H^0(X,lL) \neq 0$.
In particular, when $D=0$, we get the inequality
\[
\kappa(X, -(K_X+ \Delta)) \leq \kappa(\Phi, -(K_\Phi+ \Delta_\Phi)) + \kappa(Z,-K_Z).
\]
\end{theorem}

\begin{proof}
If the theorem did not hold, there would exist a nonzero section $s$ in the kernel of $\alpha$.
Then $s$ defines an effective $\Z_{(p)}$-divisor $N \sim_{\Z_{(p)}} -(K_X+\Delta)-f^*D$ containing $\Phi$ in its support.
Since $\kappa(Z, -K_Z-D)=0$, there exists a unique effective $\Q$-divisor $M \sim_{\Q}-K_Z-D$. 
Suppose that $f(\Phi)= z \in Z$ is such that:
\begin{itemize}
\item $f$ is flat in a neighbourhood of $z$;
\item $z$ is a regular point of $Z$;
\item $z \notin \supp(M)$;
\item $\Phi \not\subseteq \supp(\Delta)$. 
\end{itemize}
Consider the diagram
\[
\xymatrix{
H \subseteq X' \ar@<10pt>[d]_{f'} \ar[r]^{\pi} & X \supseteq \Phi \ar@<-10pt>[d]^f \\
E \subseteq Z' \ar[r]^{\mu} & Z \ni z,
}
\]
where
\begin{itemize}
\item $\mu \colon  Z' \rightarrow Z$ is the blow-up at $z$;
\item $\pi\colon  X' \rightarrow X$ is the blow-up at $\Phi:=f^{-1}(z)$; note that, by flatness of $f$ near $z$ and normality of the fibres, $X'$ is normal and coincides with $X \times_Z Z'$;
\item $H:= \Exc(\pi) \cong \Phi \times E = f'^*E$ with $E:= \Exc(\mu)$.
\end{itemize}
Let $D':= {\mu}^*D$ and let $\Delta'$ be the strict transform of $\Delta$. Then:
\begin{align*}
& -K_{Z'} = {\mu}^*(-K_Z)-aE, \qquad & a= \dim(Z) -1;\\
& -(K_{X'}+ \Delta')= \pi^*(-(K_X+ \Delta)) -bH, \qquad & b = \codim(\Phi)-1=a.
\end{align*}
By assumption, $\Bs(\widebar{m}\pi^*L)$ does not dominate $Z'$, and
\[
\pi^*L = \pi^*(-(K_X+\Delta)-f^*D)= -(K_{X'}+ \Delta') + bf'^*E-f'^*D'.
\]
Let $c$ be the coefficient of $H = f'^*E$ in $\pi^*N$. Since $\Phi$ is in the support of $N$, $c >0$ and since $N$ is a $\Z_{(p)}$-divisor, $c \in \Z_{(p)}$.
The $\Z_{(p)}$-divisor $\pi^*N - c f'^*E \geq 0$ is effective, and thus by Proposition \ref{prop. 4.2} there exists an effective $\Q$-divisor $\Gamma_{Z'}$ that is $\Q$-linearly equivalent to $-K_{Z'}+bE-D'-\varepsilon c E$ for some $0<\varepsilon \ll 1$.
Then we would have
\[
\Gamma_{Z'} + \varepsilon c E \sim_{\Q} {\mu}^*(-K_Z-D) \sim_{\Q} {\mu}^*M.
\]
Both sides of the above equation are effective, the LHS has $E$ in its support, whereas the RHS does not. However, $\kappa(Z', {\mu}^*M)=0$, which is a contradiction.
\end{proof}

\section{Proof of \texorpdfstring{$C_{3,m}^-$}{}}
\sectionmark{$C_{3,m}^-$}

Here we use the results of the previous sections to prove $C_{3,m}^-$ over fields of positive characteristic when the source of the fibration is a threefold.

One of the main technical difficulties in extending the proof of \cite[Theorem 4.1]{anti} in positive characteristic is that in this context it is hard to control the singularities of the fibres of a fibration.
The next lemmas are needed to tackle this problem.

\begin{lemma} \label{base locus}
Let $S$ be a normal projective $\Q$-factorial surface such that $\kappa(S,-K_S)= 1$ and let $g\colon  S \dashrightarrow C$ be the rational map induced by the linear system $|-mK_S|$ for $m \gg 0$. Write $|-mK_S|= |M| + B$, where $|M|$ is the movable part (i.e.\ its base locus has codimension $\geq 2$), and $B$ is the fixed divisor.\\
Then $M \sim_{\Q} g^*A$ for an ample $\Q$-divisor $A$ on $C$, and the support of $B$ does not dominate $C$.
In particular, $g$ can be extended everywhere, it coincides with the Iitaka fibration of $-K_S$, and it is a (quasi-)elliptic fibration.
\end{lemma}

\begin{proof}
Since the base locus of $M$ has dimension $0$, $M$ is semiample by Zariski--Fujita's theorem \cite[Theorem 2.8]{Fuj}. By possibly substituting it with a multiple it is then linearly equivalent to $g^*A$ for an ample divisor $A$ on $C$.
Let $B^h$ and $B^v$ be effective divisors decomposing $B$ into its horizontal (i.e.\ dominating $C$) and vertical components, respectively. We need to show that $B^h=0$.
Suppose this were not true. 
Then the divisor $B^h$ would be relatively big, and thus there would exist $\Q$-divisors $H$ and $E$ such that $H$ is effective and relatively ample, $E$ is effective, and $B^h \sim_{\Q} H+E$.
Write, for $0< \varepsilon \ll 1$,
\[
-mK_S \sim_{\Q} (g^*A + \varepsilon H) + (1-\varepsilon)H+E+B^v.
\]

By the Nakai--Moishezon criterion, $g^*A + \varepsilon H$ is ample, and $(1-\varepsilon)H+E+B^v$ is effective, showing that $-K_S$ is big, which is a contradiction.\\
Therefore $g$ can be extended everywhere and coincides with the Iitaka fibration of $-K_S$.
Since $B$ is vertical, the general fibre of $g$ has arithmetic genus $1$.
\end{proof}

\begin{proof}[Proof of Theorem \ref{C3m-}.]
First of all, we can assume that the base field $k$ is algebraically closed.
Indeed, if not, consider the base change of the setting with its algebraic closure $\widebar{k}$.
By the flat base change theorem, if $\mathcal{L}$ is a line bundle on $X$ over $k$ and $\widebar{\mathcal{L}}$ is its base change on $\widebar{X}:= X \times_k \widebar{k}$ over $\widebar{k}$, then $\kappa(X, \mathcal{L})= \kappa(\widebar{X}, \widebar{\mathcal{L}})$.

Note that we can assume that $\kappa(X, -(K_X+ \Delta)) \geq 0$.
If $Z$ is a curve, then the result holds by Theorem \ref{Cn1-}.
Thus we only need to consider what happens when $Z$ is a surface.
By Proposition \ref{thm. 3.8}, $-K_Z$ is $\Q$-effective.
If $\kappa(Z,-K_Z)=0$ then we conclude by Theorem \ref{Cn,n-1-} in the previous section.
If $\kappa(Z,-K_Z)=2$ then Easy Additivity Theorem \ref{easy additivity} gives the conclusion.
We are therefore only left with the case $\kappa(Z, -K_Z)=1$.
We want to reduce to a situation where we can apply Theorem \ref{Cn,n-1-}. 
To do this, we consider the Iitaka fibration of $Z$ following the ideas of \cite[Theorem 4.1]{anti}.

Fix an $m \gg 0$ and let $g\colon  Z \rightarrow C$ be the Iitaka fibration induced by $|-mK_Z|$. By Lemma \ref{base locus} this is well-defined everywhere.
Write $-mK_Z \sim g^*A+B$, where $B$ is the fixed vertical divisor, and $A$ is ample on $C$. 
Call $H$ the general fibre of $h := g \circ f$; it is reduced since $h$ is a fibration to a curve and hence separable by \cite[Corollary 2.5]{Schroer}.
Let $\nu\colon H^{\nu} \to H \subseteq X$ be its normalisation, and let $\varphi \colon H^{\nu} \to \Psi$ be the induced morphism obtained via restriction from $f$; thus $\Psi$ is a general fibre of $g$.
By the Easy Additivity Theorem \ref{easy additivity},
\begin{align*}
\kappa(X,-(K_X+\Delta)) & \leq \kappa(H^{\nu}, -\nu^*(K_X+\Delta)) + \dim(C) \\
& = \kappa(H^{\nu}, -\nu^*(K_X+\Delta)) + \kappa(Z,-K_Z).
\end{align*}
To conclude, it suffices to prove that $\kappa(H^{\nu}, -\nu^*(K_X+\Delta)) \leq \kappa(\Phi, -(K_\Phi+\Delta_\Phi))$, for a general fibre $\Phi$ of $f$.
By Lemma \ref{base locus}, the arithmetic genus of a general fibre of $g$ is $1$; so, if the characteristic of the base field is $\geq 5$, then it is smooth because quasi-elliptic fibrations exist only in characteristics $2$ and $3$ by a result of \cite{Tate}.
Moreover, since the general fibre of $f$ is smooth, there exists $U \subseteq \Psi$ such that $f^{-1}(U) \subseteq H$ is smooth. In particular, $H^{\nu} \to H$ is an isomorphism over $U$.
Therefore if we define a $\Q$-divisor $\Delta_{H^{\nu}}$ by $K_{H^{\nu}}+\Delta_{H^{\nu}}= (K_X+\Delta)|_{H^{\nu}}$, then
\[
(K_{H^{\nu}}+\Delta_{H^{\nu}})|_{\Phi} = K_{\Phi}+\Delta_{\Phi},
\]
where $\Phi$ is a fibre of $\varphi$ over a point in $U$.
By \cite[Corollary 1.3]{LMMP} there exists an effective Weil divisor $\mathcal{C}$ such that
$$
K_{H^{\nu}} + \mathcal{C} = K_X|_{H^{\nu}},
$$
and thus $\Delta_{H^{\nu}}= \nu^*\Delta +\mathcal{C}$ is a $\Z_{(p)}$-divisor.
Note that since $H$ is $S_2$, the restriction of a $\Q$-Weil divisor on $H$ is well-defined. Indeed, if $X^{\text{sm}}$ is the regular locus of $X$, $X^{\text{sm}} \cap H$ has still complement of codimension $\geq 2$ for $H$ general.
Let us study the base locus of $-\widebar{m}(K_{H^{\nu}}+\Delta_{H^{\nu}})$.
Note that a general fibre of $g$ is not contained in $f(\Bs(-\widebar{m}(K_X+\Delta)))$.
Indeed, fix one of these fibres, say $\Psi$, and let $H$ be the fibre of $h$ over $\Psi$. 
There exist effective divisors $D_1, ..., D_{\ell} \sim -\widebar{m}(K_X+ \Delta)$, such that $\Psi \not\subseteq f(\supp(D_1) \cap ... \cap \supp(D_{\ell}))$.
Since $0 \leq \nu^*D_i \sim -\widebar{m}(K_{H^{\nu}}+ \Delta_{H^{\nu}})$, we conclude that the base locus of $-\widebar{m}(K_{H^{\nu}}+\Delta_{H^{\nu}})$ does not dominate $\Psi$. 
Thus the fibration $\varphi$ satisfies:
\begin{itemize}
\item $\Delta_{H^{\nu}}$ is an effective $\Z_{(p)}$-divisor;
\item the general fibre $\Phi$ of $\varphi$ is regular, and $(\Phi, \Delta_\Phi)$ is SFR;
\item there exists $\widebar{m}$ not divisible by $p$ such that $\Bs(-\widebar{m}(K_{H^{\nu}}+ \Delta_{H^{\nu}}))$ does not dominate $\Psi$;
\item $\Psi$ is smooth, and $\kappa(\Psi,-K_\Psi)=0$.
\end{itemize}
We are in the right setting to apply Theorem \ref{Cn,n-1-}, whence
\[
\kappa(H^{\nu}, -\nu^*(K_X+\Delta)) \leq \kappa(\Phi, -(K_\Phi+\Delta_\Phi)).
\]
\end{proof}

\section{Counterexamples}

Counterexamples to $C_{n,m}^-$ in any characteristic, removing the hypothesis on the stable base locus, are easy to obtain just looking at ruled surfaces (see \cite[Example 1.7]{anti}).

The next examples show that the assumption on the singularities of the fibres is essential.
They are constructed from Tango--Raynaud surfaces.
Analysing them, in the paper \cite{Paolo}, the authors found counterexamples to $C_{n,m}$ in characteristic $p$ for any $p>0$. A similar construction gives counterexamples to $C_{7,6}^-$ in characteristics $2$ and $3$.

\begin{definition}
Let $C$ be a smooth projective curve of genus $g_C \geq 2$ over an algebraically closed field of characteristic $p>0$, and let $K(C)$ be its function field. Define the \textbf{Tango invariant}:
\[
n(C):= \max \left\{ \deg\left( \left\lfloor \frac{(df)}{p} \right\rfloor \right) \, | \, f \in K(C) \right\},
\]
where $(df)$ denotes the divisor of zeroes and poles of the differential $df$.
Note that $p n(C) \leq 2g_C -2$. We say that $C$ is a \textbf{Tango curve} if $n(C)>0$ and that it is a \textbf{Tango--Raynaud curve} if, moreover, $p n(C) = 2g_C -2$.
\end{definition}

\begin{example}[{\cite[Example 1.3]{Mukai}}] \label{example}
Let $e \in \N$ and let $C$ be the plane curve defined by the equation $Y^{pe} - YX^{pe-1}=Z^{pe-1}X$ in $\Pn2{\widebar{\mathbb{F}}_p}$ with coordinates $[X:Y:Z]$. It is smooth, and, by adjunction, if $g_C$ is its genus, then $2g_C-2=pe(pe-3)$.
Consider the differential form $d\left(\frac{Z}{X}\right)$ and denote by $\infty$ the point $[0:0:1] \in C$.
Then $\left(d\left(\frac{Z}{X}\right)\right)= pe(pe-3)(\infty)= (2g_C-2)(\infty)$, showing that $C$ is a Tango--Raynaud curve. 
\end{example}

Let $C$ be a curve over an algebraically closed field of characteristic $p>0$. Consider the Frobenius map $F\colon  C \rightarrow C$. Denote by $\mathcal{B}$ the cokernel of the induced map ${\Ox{C} \rightarrow F_*\Ox{C}}$. Thus, for any Cartier divisor $D$, we have the exact sequence
\[
0 \rightarrow \Ox{C}(-D) \rightarrow F_*(\Ox{C}(-pD)) \rightarrow \mathcal{B}(-D) \rightarrow 0.
\]

\begin{lemma}[{\cite[Lemma 2.5]{Xie}}] \label{l-Xie}
With the same notation as above,
\[
H^0(C, \mathcal{B}(-D))= \{ df | \, f \in K(C), \, (df) \geq pD \}.
\]
\end{lemma}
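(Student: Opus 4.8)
The plan is to identify the sheaf $\mathcal{B}$ with the sheaf of locally exact differentials and then read off its twisted global sections by means of the Cartier operator. First I would recall the fundamental exact sequence attached to the Frobenius of a smooth curve,
\[
0 \to \Ox{C} \to F_*\Ox{C} \xrightarrow{\ d\ } F_*\Omega^1_C \xrightarrow{\ \mathscr{C}\ } \Omega^1_C \to 0,
\]
where $d$ is the exterior derivative and $\mathscr{C}$ is the Cartier operator. Since the kernel of $d$ consists exactly of the functions with vanishing differential, i.e.\ the $p$\textsuperscript{th} powers, which is precisely the image of $\Ox{C} \to F_*\Ox{C}$, the map $d$ factors through an isomorphism between $\mathcal{B} = \operatorname{coker}(\Ox{C} \to F_*\Ox{C})$ and $\operatorname{im}(d) = \ker(\mathscr{C}) =: B \subseteq F_*\Omega^1_C$, the sheaf of locally exact differentials.

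Next I would twist by $\Ox{C}(-D)$. Using the projection formula together with $F^*\Ox{C}(-D) = \Ox{C}(-pD)$ (the absolute Frobenius multiplies divisors by $p$), the identification above gives
\[
\mathcal{B}(-D) \cong B(-D) \hookrightarrow F_*\Omega^1_C \otimes \Ox{C}(-D) = F_*\!\left(\Omega^1_C(-pD)\right).
\]
Because $F$ is a homeomorphism on the underlying topological space, $H^0(C, F_*(\Omega^1_C(-pD))) = H^0(C, \Omega^1_C(-pD))$, which is the space of rational differentials $\omega$ with $(\omega) \geq pD$. Hence a global section of the subsheaf $\mathcal{B}(-D) \cong B(-D)$ is precisely a rational differential $\omega$ with $(\omega) \geq pD$ that lies locally in $B$, that is, one satisfying $\mathscr{C}(\omega) = 0$. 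In particular the divisor bound $(\omega) \geq pD$ is produced automatically by the twist.

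Finally I would invoke Cartier's theorem at the level of the function field: since $[K(C):K(C)^p] = p$, the kernel of the Cartier operator on $\Omega^1_{K(C)/k}$ is exactly the space of exact differentials $dK(C)$. Thus $\mathscr{C}(\omega) = 0$ if and only if $\omega = df$ for some $f \in K(C)$, and combining this with $(\omega) \geq pD$ yields $H^0(C, \mathcal{B}(-D)) = \{\, df \mid f \in K(C),\ (df) \geq pD \,\}$, as claimed. I expect the only delicate points to be the identification $\mathcal{B} \cong \ker(\mathscr{C})$ via exactness of the Cartier sequence and the careful bookkeeping of the twist through $F^*\Ox{C}(-D) = \Ox{C}(-pD)$; once these are in place, Cartier's theorem converts ``annihilated by the Cartier operator'' into ``globally of the form $df$'' essentially for free, so that step is the crux but also the shortest.
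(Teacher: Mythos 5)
Your argument is correct: identifying $\mathcal{B}$ with the sheaf of locally exact differentials $\ker(\mathscr{C})$ via the exact sequence $0 \to \Ox{C} \to F_*\Ox{C} \xrightarrow{d} F_*\Omega^1_C \xrightarrow{\mathscr{C}} \Omega^1_C \to 0$, twisting through the projection formula with $F^*\Ox{C}(-D) = \Ox{C}(-pD)$, and then applying Cartier's theorem at the level of the function field is exactly the standard proof of this statement. Note that the paper itself gives no proof at all --- it imports the lemma directly from \cite[lemma 2.5]{Xie} --- so your write-up is in effect supplying the omitted argument, and it does so correctly.
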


Moreover, if $D$ is effective, then $H^0(C, \mathcal{B}(-D))$ is the kernel of $F^*\colon  H^1(C,\Ox{C}(-D)) \rightarrow  H^1(C,\Ox{C}(-pD))$.

Now, let $C$ be a Tango--Raynaud curve over an algebraically closed field of characteristic $p>0$ equipped with an effective divisor $D$ and a non-zero element $df$ in $H^0(C, \mathcal{B}(-D))$ such that $(df)=pD$. This determines a non-zero element of $H^1(C,\Ox{C}(-D))$ which is mapped to zero by the Frobenius morphism.
Note that $df$ determines a (non-split) short exact sequence
\[
0 \rightarrow \Ox{C}(-D) \rightarrow \mathcal{E} \rightarrow \Ox{C} \rightarrow 0, \tag{\raisebox{-0.5ex}{\SixFlowerRemovedOpenPetal}} \label{ses}
\]
where $\mathcal{E}$ is a rank two vector bundle on $C$.
After applying the Frobenius morphism, the above exact sequence becomes split, and thus we also get
\[
0 \rightarrow \Ox{C} \rightarrow F^*\mathcal{E} \rightarrow \Ox{C}(-pD) \rightarrow 0. \tag{\raisebox{-0.5ex}{\Snowflake}} \label{split ses}
\]
Let $g\colon  P:= \mathbb{P}(\mathcal{E}) \rightarrow C$ and $g_1\colon  P_1:= \mathbb{P}(F^*\mathcal{E}) \rightarrow C$.
Thus we have the commutative diagram
\[
\xymatrix{
P \ar[d]_{F_{P/C}} \ar[dr]^{F_P} & \\
P_1 \ar[r]^\varphi \ar[d]_{g_1} & P \ar[d]^{g}\\
C \ar[r]^{F_C}& C,
}
\]
where the lower square is a fibre product diagram and $F_{P/C}$ is the relative Frobenius.\\
Let $T$ be a divisor on $P$ such that $\Ox{P}(T) \simeq \Ox{P}(1)$. The short exact sequence (\ref{ses}) defines a section of $g$, $A \sim T + g^*D$. 
On the other hand, the short exact sequence (\ref{split ses}) defines a section of $g_1$, $B_1 \sim \varphi^*A -pg_1^*D$. Let $B:= F_{P/C}^*B_1$. Then $B \sim pA-pg^*D \sim pT$.
Computing the arithmetic genus of $A$ and $B$ using the adjunction formula and the fact that $\deg(D)= \frac{2g_C-2}{p}$, we see that the curves $A$ and $B$ are smooth of genus $g_C$ and they are disjoint.

If there exists $l >0$ such that $l$ divides $p+1$ and $D= lD'$ for an effective divisor $D'$, then
\[
A+B \sim (p+1)T +g^*D = l(rT + g^*D') =lM,
\]
where $r = \frac{p+1}{l}$ and $M:= rT+g^*D'$. Examples where we can find such $l$ are the curves of Example \ref{example}, choosing an appropriate $e$. Since the support of the divisor $A+B$ is smooth, the $l$-cyclic cover defined by the above equivalence yields a smooth surface $S$.
Call $\pi\colon  S \rightarrow P$ the cover and $f= g \circ \pi\colon  S \rightarrow C$.\\
The last step in this construction consists in taking $m$-times the fibre product of $S$ over $C$: $X^{(m)}:= S \times_C S \cdots \times_C S$. Let $p_i\colon  X^{(m)} \rightarrow S$ be the projection to the $i^{\text{th}}$ factor, and let $f^{(m)}\colon  X^{(m)} \rightarrow C$ be the composition of $f$ with any of these projections.

\begin{theorem} \label{t-counterexamples}
Let $\{p,l\}=\{2,3\}$ and consider the fibration $X^{(6)} \rightarrow X^{(5)}$.
The stable base locus of the anticanonical divisor of $X^{(6)}$ is empty, and its non-klt locus does not dominate $X^{(5)}$.
Moreover, $\kappa(X^{(6)}, -K_{X^{(6)}})=0$, whereas $\kappa(X^{(5)}, -K_{X^{(5)}})=- \infty$.
\end{theorem}

\begin{remark} \label{r-otherhp}
Note that the statement of $C_{n,m}^-$ in characteristic $0$ for fibrations $X \to Z$ and pairs $(X, \Delta)$ as in \cite[Theorem 4.1]{anti} can be rephrased by substituting the assumption
\begin{itemize}
\item[$\bullet$] the pair induced on the general fibre $(\Phi, \Delta_{\Phi})$ is klt, and the stable base locus of ${-(K_X+ \Delta)}$ does not dominate $Z$,

with the assumption
\item[$\bullet'$] the non klt locus of $(X, \Delta)$ does not dominate $Z$, and the stable base locus of ${-(K_X+ \Delta)}$ does not dominate $Z$.
\end{itemize}
Theorem \ref{t-counterexamples} gives counterexamples in characteristics $2$ and $3$ to $C_{7,6}^-$ with this alternative statement.
\end{remark}

\begin{proof}
The relative anticanonical divisors of $g$ and $f$ for our choices of $p$ and $l$ are
\[
K_{P/C} = -2T - pg^*D
\]
and
\[
K_{S/C} = \pi^*(K_{P/C} + (l-1)M) = -f^*D'.
\]
Then the anticanonical sheaf of $X^{(m)}$ is
\[
\omega^{-1}_{X^{(m)}} = \left( \prod_{i=1}^m p_i^*\omega^{-1}_{S/C} \right) \otimes \omega_C^{-1}.
\]
Therefore, for any positive integer $n$, we have
$
-nK_{X^{(m)}} = f^{(m)*}((nm-6n)D')
$,
whence
\[
\kappa(X^{(m)}, -K_{X^{(m)}})=
\begin{cases}
1 \quad \text{for} \; m>6 \\
0 \quad \text{for} \; m=6 \\
- \infty \quad \text{for} \; m<6.
\end{cases}
\]
Note that the base locus is empty for $m \geq 6$.

Studying local equations, it is easy to see that the varieties $X^{(m)}$ are normal and their singular locus is the union of $\supp(T_i) \cap \supp(T_j)$ for $i \neq j$, where $T_i := (\pi \circ p_i)^*T$. In particular, for the chosen fibration $X^{(6)} \to X^{(5)}$, the non-smooth locus does not dominate the base.
Indeed, let $s$ be a local parameter on the fibres of $g$ such that locally $A= \{s=\infty\}$ and $B=\{s^p=0\}$. First, we look at what happens away from $A$. Let $(x_1, ..., x_r)=\underline{x}$ be local affine coordinates on $C$. Then, as showed in \cite[Section 2]{Mukai}, there is $f(\underline{x})$ such that $S$ affine locally has equation $t^l=s^p-f(\underline{x})$ inside $\An1{} \times \An1{} \times C$. Then, affine locally, we can see that $X^{(m)} \subseteq \An{m}{} \times \An{m}{} \times C$ with coordinates $t_1,....,t_m, s_1,...,s_m, \underline{x}$ has equations $t_i^l=s_i^p-f(\underline{x})$ for $i = 1,...,m$. Applying the Jacobi criterion, we get the result.\\
On the other hand, if $\sigma= 1/s$ is a local parameter defining $A$ on $P$, then an affine local equation for $S$ away from $B$ is $\tau^l=\sigma$. We then find that $X^{(m)}$ is smooth in this open subset.
\end{proof}

\begin{remark}
The above computations also show that the fibres of $X^{(6)} \rightarrow X^{(5)}$ are not normal.
\end{remark}

\begin{corollary} \label{c-counterexamples}
Let $p=2$ or $p=3$ and assume the existence of resolutions of singularities and that we can run the birational minimal model program for smooth varieties of dimension $7$ in characteristic $p$.
Let $\{ p, l \} = \{ 2,3 \}$, and let $X^{(5)}$ and $X^{(6)}$ be the varieties constructed as above.
Then there exists a klt variety $Y$ of dimension $7$ with a fibration $Y \to X^{(5)}$ for which $C_{7,6}^-$ does not hold.
In particular, the base locus of $-nK_Y$ does not dominate $X^{(5)}$ for all $n \in \N$.
\end{corollary}

\begin{proof}
For simplicity of notation, let $X:=X^{(6)}$.
Let $U \subseteq X$ be the regular locus of $X$, and let $\mu \colon X' \to X$ be a resolution that is an isomorphism over $U$.
Write $K_{X'}= \mu^*(K_X)+E$, where $E$ is an exceptional $\Q$-divisor.
Run a $K_{X'}$-MMP over $X$ and let $\varphi \colon X' \dashrightarrow Y$ and $\sigma \colon Y \to X$ be the resulting birational maps.
Note that $Y$ is klt.
Then by the Negativity lemma \cite[Lemma 3.39]{KM}, $-\varphi_*E \geq 0$. Thus $-K_Y=-\sigma^*K_X-\varphi_*E \geq -\sigma^*K_X$.
Since $\mu(\supp(E))$ is disjoint from $U$, the base locus of $-nK_Y$ does not dominate $X^{(5)}$ for any $n \in \N$.
Furthermore, $\kappa(Y,-K_Y) \geq \kappa(X, -K_X)=0$, whereas $\kappa(X^{(5)}, -K_{X^{(5)}})= - \infty$.
\end{proof}

\bibliographystyle{alpha}
\bibliography{Cnm-.bib}

\end{document}